  \newcommand{\bDoNotIncludePackages}{0}
  \newcommand{\bSkipDocumentSetting}{0}
  \newcommand{\bDoNotDefineTheorems}{0}
\def\N{\mathbb N}
\def\A{\mathcal A}
\def\C{\mathcal C}
\def\P{\mathcal P}
\def\P{\mathcal P}
\def\PT{{\mathcal P}_{\Theta}}
\def\DT{{D}_{\Theta}}
\def\L{\mathcal L}
\def\Lu{{\mathcal L}(\uu)}
\def\uu{\mathbf u}
\def\tt{\mathbf t}
\def\PalT{{\rm Pal}_{\Theta}}
\def \Rk#1 {$\mathcal{R}_{#1}$}
\def \Rext {{\rm Rext}}
\def \Lext {{\rm Lext}}
\def \b {{\rm b}}
\def \FC#1 {
\mathcal{C}
\ifthenelse{\equal{#1}{}}{}{(#1)}
}
\def \PC#1 {
\mathcal{P}_{\Theta}
\ifthenelse{\equal{#1}{}}{}{(#1)}
}
\def \PCn#1 {
\mathcal{P}
\ifthenelse{\equal{#1}{}}{}{(#1)}
}
\def \TuT {T_{\Theta}}
\def \Tr {R}
\def \Ta {\Theta_1}
\def \Tb {\Theta_2}
\def \gT {\gamma_{\Theta}}
\newtheorem{thm}{Theorem}
\newtheorem{corollary}[thm]{Corollary}
\newtheorem{proposition}[thm]{Proposition}
\newtheorem{defi}[thm]{Definition}
\crefname{thm}{theorem}{theorems}
\crefname{thrm}{theorem}{theorems}
\crefname{coro}{corollary}{corollaries}
\crefname{example}{example}{examples}
\crefname{lem}{lemma}{lemmas}
\crefname{lmm}{lemma}{lemmas}
\crefname{claim}{claim}{claims}
\crefname{obs}{observation}{observations}
\crefname{proposition}{proposition}{propositions}
\crefname{prop}{proposition}{propositions}
\crefname{defi}{definition}{definitions}
\crefname{theorem}{theorem}{theorems}
\crefname{corollary}{corollary}{corollaries}
\crefname{example}{example}{examples}
\crefname{lemma}{lemma}{lemmas}
\crefname{proposition}{proposition}{propositions}
\crefname{definition}{definition}{definitions}
\theoremstyle{remark}
\newtheorem{remark}[thm]{Remark}
\newtheorem{example}[thm]{Example}
\crefname{example}{example}{examples}
\definecolor{sediva}{rgb}{0.85,0.85,0.85}
\begin{document}

\begin{frontmatter}



\title{Languages invariant under more symmetries: \\
overlapping factors versus palindromic richness}


\author[fnspe]{Edita Pelantov\'a}
\author[fnspe,fit]{\v St\v ep\'an Starosta}
\address[fnspe]{Department of Mathematics, FNSPE, Czech Technical University in Prague,\\ Trojanova 13, 120~00 Praha~2, Czech Republic}
\address[fit]{Department of Applied Mathematics, FIT, Czech Technical University in Prague, \\ Th\'akurova~9, 160~00, Praha~6, Czech Republic}

\begin{abstract}

Factor complexity $\mathcal{C}$  and palindromic complexity $\mathcal{P}$ of
infinite  words with language closed under reversal are known to
be related  by the inequality $\mathcal{P}(n) + \mathcal{P}(n+1)
\leq 2 + \mathcal{C}(n+1)-\mathcal{C}(n)$ for any $n\in
\mathbb{N}$\,. Words for which the equality is attained for any
$n$ are usually called rich in palindromes. We show that rich words contain infinitely many overlapping factors.
We study words whose languages are invariant under a finite group $G$
of symmetries. For such words we prove a stronger version of the
above inequality. We introduce the notion of $G$-palindromic richness
and  give several examples of $G$-rich words, including the
Thue-Morse word as well.

\end{abstract}

\begin{keyword}


palindromic richness \sep overlaps \sep symmetries \sep Thue-Morse
word \sep group palindromic richness

\MSC 68R15

\end{keyword}

\end{frontmatter}

\section{Introduction}

In the last decade, a broad interest in the study of palindromes can
be observed. Attention to palindromes was brought on one hand by
the article \cite{DrJuPi} where a bound on the number of distinct palindromes
occurring in a finite word was given, and on the other hand
by the role played by palindromes in the spectral theory of Schr{\"o}dinger
operators with aperiodic potential \cite{HoKnSi}.
The fact that the existence of many palindromes in an infinite word
$\uu$ is connected with its factor complexity $\C$ was for the first time recognized in the article
\cite{AlBaCaDa}. Its authors proved that
\begin{equation}\label{ABCD}
\mathcal{P}(n) \leq \tfrac{16}{n} \Big ( \mathcal{C}(n)+
\mathcal{C}\bigl(\bigl\lfloor\tfrac{n}{4}\bigr\rfloor\bigr) \Big)
\end{equation}
where $\P$ counts the number of
distinct palindromes of given length occurring in $\uu$.  A special case of
this inequality for fixed points of primitive morphisms was
already proven in \cite{DaZa}.  In \cite{BaMaPe}, a
relation between palindromic complexity and increment of factor
complexity was established for infinite words whose language is
closed under reversal:
\begin{equation}\label{nasa}
\mathcal{P}(n) + \mathcal{P}(n+1)   \leq 2 +
\mathcal{C}(n+1)-\mathcal{C}(n)\,.
\end{equation}
In \cite{BaMaPe}, the relation is stated under an additional hypothesis of uniform recurrence. However, this hypothesis is not used in the proof and the claim is valid without it. 
The words for which the equality in \eqref{nasa} is attained for
any $n \in \N$ are called rich or full and there exists
extensive literature on the topic, see \cite{DrJuPi,GlJuWiZa}.
The most famous examples of rich words are episturmian words (see \cite{DrJuPi}), which include Sturmian and Arnoux-Rauzy words,
and words coding interval exchange transformations determined by a symmetric permutation, see \cite{BaMaPe}.

Palindromes are formally defined as fixed points of the reversal
mapping. We may  replace the reversal mapping by another
involutive antimorphism $\Theta$ to obtain a generalization of the
notion of a palindrome. A fixed point of such mapping $\Theta$ is called a
$\Theta$-pa\-lindrome.  In \cite{Sta2010}, the second author showed that the
inequality \eqref{nasa} is still valid  if one substitutes
palindromic complexity $\mathcal{P}(n)$ by $\Theta$-palindromic
complexity $\mathcal{P}_{\Theta}(n)$. Moreover, many properties
known to be  possessed  by  rich words are possessed  by  their
generalizations -  $\Theta$-rich words - as well.

In this article we introduce the new concept of $G$-rich words,
where $G$ is a finite group generated by more antimorphisms.
Motivation for this new notion  is a property of  \mbox{$\Theta$-rich}
words we show in Theorem \ref{ctverce}, i.e., that any
$\Theta$-rich word contains infinitely many overlapping factors.
Therefore, words without large overlaps have no chance to be  rich
in $\Theta$-palindromes. But paradoxically, the language of the most prominent
word without overlaps - the Thue-Morse word - is closed
simultaneously  under two antimorphisms and  contains infinitely
many palindromes and $\Theta$-palindromes.

For an infinite word whose language is closed under two commuting
antimorphisms  $\Theta_1$ and $\Theta_2$ we deduce   a new
inequality relating  factor complexity $\mathcal{C}(n)$ with
$\mathcal{P}_{\Theta_1}(n)$ and $\mathcal{P}_{\Theta_2}(n)$, see
Theorem \ref{nerovnostProDva}. We also show that for the
Thue-Morse word   the equality in Theorem \ref{nerovnostProDva}
holds  for each $n$, i.e.,  that the Thue-Morse word is saturated
 by palindromes and $\Theta$-palindromes together  up to the highest
possible level. Therefore, in Section \ref{grupasymetrii}, we
propose to adopt a new definition of richness for words whose
language is closed under all elements of a finite group $G$ of symmetries (formed
by morphisms and antimorphisms). To such a word we assign a graph
of symmetries $\Gamma_n$ for any $n\in\mathbb{N}$.   The
connectedness of $\Gamma_n$
 implies an inequality between  factor complexity and
$\Theta$-palindromic complexities, see Theorem \ref{nerovnostProVice}.
Nevertheless,  the  definition of $G$-richness  is not based on an
inequality, but on the structure of the graph of symmetries. Let
us stress that in the case when the group $G$ is generated only by one
antimorphism the new definition of richness and the old one
coincide. In Section \ref{sec:examples}, we provide some examples of
$G$-rich words.

The new point of view on richness of infinite words triggers many
questions on properties of rich words. Some of these open questions are
collected in the last section.

\section{Preliminaries}
An \textit{alphabet} $\mathcal{A}$ is a finite set, its elements are
usually called \textit{letters}. By a \textit{finite word} over an alphabet
$\mathcal{A}$ we understand a finite string $w = w_1w_2\ldots w_n$
of letters $w_i\in \mathcal{A}$. Its \textit{length} $n$ is denoted by
$|w|$. The set of all finite words over $\mathcal{A}$ equipped
with the operation of concatenation is the \textit{free monoid} $\mathcal{A}^*$, its
neutral element is the \textit{empty word} $\varepsilon$.  A word $v \in
\mathcal{A}^*$ is a \textit{factor} of a word $w \in \mathcal{A}^*$ if there
exist words $s,t\in \mathcal{A}^*$ such that $w=svt$. If
$s=\varepsilon$, then $v$ is a \textit{prefix} of $w$; if  $t=\varepsilon$,
then $v$ is a \textit{suffix} of $w$.

\subsection{Antimorphisms and their fixed points}
A  mapping $\varphi$ on $\mathcal{A}^*$ is
\begin{itemize}
\item
a \textit{morphism} if $ \varphi(vw)=  \varphi(v) \varphi(w)$ for any $v,w
 \in \mathcal{A}^*$;
\item an \textit{antimorphism} if $ \varphi(vw)=  \varphi(w) \varphi(v)$ for
any $v,w \in \mathcal{A}^*$.
\end{itemize}
We denote the set of all morphisms and antimorphisms on $\mathcal{A}^*$ by $AM({\mathcal{A}^*})$.
Together with composition, it forms a monoid with the identity mapping $\rm{Id}$ as the unit element.
 The set of all morphisms, denoted by $M({\mathcal{A}^*})$, is a
submonoid of $AM({\mathcal{A}^*})$. The \textit{reversal mapping}
$\Tr$ defined by $$\Tr(w_1w_2\ldots w_n) =
w_nw_{n-1}\ldots w_2w_1, \qquad \text{ where $w_i \in \A$,}$$ is an \textit{involutive} antimorphism, i.e.,
$\Tr^2 = \rm{Id}$. It is obvious that any antimorphism  is a
composition of $\Tr$ and a morphism. Thus
$$AM({\mathcal{A}^*})= M({\mathcal{A}^*})\cup
\Tr \big ( M({\mathcal{A}^*}) \big)\,.$$

A morphism or antimorphism $\nu \in AM(\A)^*$ is \textit{non-erasing} if for all $a \in \A$ we have $| \nu(a) | > 0$.

A fixed point of a given antimorphism $\Theta$  is a \textit{$\Theta$-palindrome}, i.e., a word $w$ is a  $\Theta$-palindrome if $w=\Theta(w)$.
If $\Theta$ is the reversal mapping $\Tr$, we say
palindrome or classical palindrome  instead of $\Tr$-palindrome.
If a non-erasing antimorphism $\Theta$ has a fixed point $w$ containing all letters of $\A$,
then, since $\Theta^2$ is a non-erasing morphism with a fixed point $w$ containing all letters of $\A$, we have $\Theta^2(a) = a$  for all $a \in \A$ . It means that $\Theta$ is
an involution, and thus a composition of $\Tr$ and an involutive permutation of letters.

Suppose $\Theta$ is an involutive antimorphism until stated otherwise.
The set  of  $\Theta$-pa\-lindromic factors of a word $w$ is denoted
by $\PalT(w)$. The cardinality of $\PalT(w)$ is bounded by
\begin{equation}\label{eq:horni_mez_poctu_pali}
\# \PalT(w) \leq |w| + 1 - \gT(w),
\end{equation}
where $\gT(w) := \# \big \{ \{ a, \Theta(a)\} \ \big | \ a \in \A, a \text{
occurs in } w \text{ and } a \neq \Theta(a) \big \}$, see
\cite{DrJuPi} for classical palindromes and
\cite{Sta2010} for generalized palindromes.

\subsection{Factor and palindromic complexities }

An \textit{infinite word} $\uu$ over an alphabet $\mathcal{A}$ is a
sequence $\uu=(u_n)_{n\in \mathbb{N}}\in \mathcal{A}^\mathbb{N}$.
We will always implicitly suppose that $\A$ is the smallest
possible  alphabet for $\uu$, i.e., any letter of $\A$ occurs at
least once in $\uu$.

 A finite word $w$ is a \textit{factor} of $\uu$ if there exists an index
$i\in \mathbb{N}$, called \textit{occurrence} of $w$, such that $w=
u_iu_{i+1}\ldots u_{i+|w|-1}$. The set of all factors of $\uu$
of length $n$ is denoted  $\mathcal{L}_n(\uu)$. The \textit{language of
an infinite  word}  $\uu$ is the set of all its factors
$\mathcal{L}(\uu)= \cup_{n\in\mathbb{N}}\mathcal{L}_n(\uu)$. An
infinite word $\uu$ is \textit{recurrent} if every its factor has at least
two occurrences in $\uu$. A factor $v\in\mathcal{L}(\uu)$ is a
\textit{complete  return word} of a factor $w$ if  $w$ occurs in $v$
exactly twice, as  a suffix and a prefix of $v$. A complete return
word $v$ of $w$ can be written as $v=qw$ for some factor $q$ which
is called a \textit{return word} of $w$. If every factor $w$ of $\uu$ occurs infinitely many times and
has only finitely many return words, then $\uu$ is \textit{uniformly recurrent}. Equivalently, $\uu$ is uniformly recurrent if every its factor occurs infinitely many times and
the gaps between its consecutive occurrences are bounded.

The \textit{factor complexity} of $\uu$ is the mapping $\mathcal{C}:
\mathbb{N}\mapsto \mathbb{N}$ defined by
$$\mathcal{C}(n)=\# \mathcal{L}_n(\uu)\,.$$
To evaluate the factor complexity of an infinite word one may consider possible
extensions of factors. A letter $a\in  \mathcal{A}$ is a \textit{left extension} of a factor $w$ in $\uu$  if $aw$ belongs to
$\mathcal{L}(\uu)$. The set of all left extensions of $w$ is
denoted $\Lext (w)$. A factor $w \in \mathcal{L}(\uu)$ is \textit{left special} (LS), if  $\#\Lext (w)\geq 2$. Analogously, we define
\textit{right extension}, the set $\Rext (w)$, and \textit{right special} (RS).  If $w$ is both right and left
special, it is \textit{bispecial} (BS). The first difference of the
factor complexity of a recurrent word  $\uu$ satisfies
\begin{equation}\label{DeltaC} \Delta \mathcal{C}(n) = \mathcal{C}(n+1) -
\mathcal{C}(n) = \sum_{w\in \mathcal{L}_n(\uu)}  \bigl(\# \Lext
(w) -1\bigr) =\sum_{w\in \mathcal{L}_n(\uu)}  \bigl(\# \Rext (w)
-1\bigr).
\end{equation}
The second difference of the factor complexity can be expressed using
the \textit{bilateral order} of a factor. It is the quantity $\b(w) = \# \{ awb
\ | \ awb \in \L(\uu)\} - \# \Lext (w) -\# \Rext (w)+1\,.$ In
\cite{Ca}, the formula
\begin{equation}\label{DeltaNaDruhuC} \Delta^2 \mathcal{C}(n) = \Delta \mathcal{C}(n+1) -
\Delta \mathcal{C}(n) = \sum_{w\in \mathcal{L}_n(\uu)} \b(w)
\end{equation}
is deduced.

The \textit{$\Theta$-palindromic complexity} of $\uu$ is the mapping $\PT(n): \N \mapsto \N$ defined by
$$
\PT(n) = \# \{ w \in \L_n(\uu) \mid w = \Theta(w) \}.
$$

\subsection{$\Theta$-richness}

A finite word $w$ is \textit{$\Theta$-rich} if the equality in \eqref{eq:horni_mez_poctu_pali} holds.
An infinite word is \textit{$\Theta$-rich} if all its factors are $\Theta$-rich.


As we already mentioned, in \cite{BaMaPe}, an inequality
involving factor and $\Tr$-palindromic complexities of infinite
words with languages closed under reversal was shown. It was
generalized for an arbitrary involutive antimorphism in
\cite{Sta2010}. In particular, it is shown that if an infinite
word has its language closed under $\Theta$, then the following
inequality holds
\begin{equation} \label{eq:nerovnost}
 \Delta  \C (n) + 2 \geq \PT(n) + \PT(n+1) \quad \text{   for all } n
\geq 1.
\end{equation}

The difference between the left and the right side in
fact decides about \mbox{$\Theta$-richness}.  Let us denote - in
accordance with the notation introduced in \cite{BrRe-conjecture} -
the quantity $\TuT(n)$ as
$$
\TuT(n) = \Delta \C (n) + 2 - \PT(n+1) - \PT(n).
$$
The following theorem  was shown in \cite{BuLuGlZa} for $\Tr$
and in \cite{Sta2010} for an arbitrary antimorphism.
\begin{thm}\label{pseudo_rich_rovnost}
If $\uu$ is an infinite word with language closed under $\Theta$,
then $\uu$ is $\Theta$-rich if and only if
$\TuT(n) = 0$ for all $n \geq 1$.
\end{thm}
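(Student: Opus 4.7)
The plan is to pin down what $\Theta$-richness of $\uu$ means position by position, and then match that condition to the per-length saturation $\TuT(n) = 0$. For a finite word $w$, I would introduce the $\Theta$-defect $D_{\Theta}(w) := |w| + 1 - \gT(w) - \#\PalT(w)$, which is non-negative by \eqref{eq:horni_mez_poctu_pali}, with $w$ being $\Theta$-rich iff $D_{\Theta}(w) = 0$. A letter-by-letter argument shows that $D_{\Theta}(wa) - D_{\Theta}(w) \in \{0,1\}$, with the increment equal to $0$ iff the longest $\Theta$-palindromic suffix of $wa$ is unioccurrent in $wa$; moreover, $D_{\Theta}$ is monotone under taking factors. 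Hence $\uu$ is $\Theta$-rich iff every prefix $u_1 \cdots u_N$ satisfies $D_{\Theta}(u_1\cdots u_N) = 0$, i.e., at every position of $\uu$ the newly-arising longest $\Theta$-palindromic suffix is unioccurrent in the current prefix.

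For the ``only if'' direction, assume $\uu$ is $\Theta$-rich and fix $n \geq 1$. The proof of \eqref{eq:nerovnost} in \cite{Sta2010} actually expresses $\TuT(n)$ as a sum of non-negative contributions indexed by factors of length $n$ (or, equivalently, by bispecial factors of length $n-1$), each one measuring how far the local extension structure is from being maximally $\Theta$-symmetric. Richness forces every $\Theta$-palindromic factor to appear unioccurrently the first time it is encountered along $\uu$, which makes each such contribution vanish, so $\TuT(n) = 0$. Conversely, assuming $\TuT(n) = 0$ for all $n \geq 1$, I would prove by induction on $N$ that $D_{\Theta}(u_1\cdots u_N) = 0$; the inductive step amounts to showing that the longest $\Theta$-palindromic suffix of $u_1 \cdots u_N$ is unioccurrent in that prefix, which is precisely the local property obstructed by a positive contribution to some $\TuT(n)$, so the global hypothesis kills the local obstruction.

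The main obstacle is the bookkeeping that links per-length slack in \eqref{eq:nerovnost} to per-position defect jumps along $\uu$: one has to re-examine the bispecial-factor analysis of \cite{BaMaPe,Sta2010} and verify that any failure of unioccurrence of the longest $\Theta$-palindromic suffix at some position of $\uu$ produces a strictly positive contribution to $\TuT(n)$ for some $n$, and conversely that no other source of slack exists. Once this correspondence is in place, both directions of the equivalence follow from the local characterization of richness via $D_{\Theta}$ and the decomposition of $\TuT$.
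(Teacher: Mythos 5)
Note first that the paper does not prove this statement at all: Theorem~\ref{pseudo_rich_rovnost} is quoted from the literature (from \cite{BuLuGlZa} for $\Tr$ and from \cite{Sta2010} for a general involutive antimorphism), so there is no internal proof to compare against. Judged on its own, your proposal has a genuine gap. The local characterization you start from (defect $D_{\Theta}$, increments in $\{0,1\}$, richness of $\uu$ equivalent to unioccurrence of the longest $\Theta$-palindromic suffix at every position) is standard and fine up to a caveat, but the entire content of the theorem is the bridge between this per-position condition and the per-length condition $\TuT(n)=0$, and that bridge is exactly what you defer: you assert that the proof of \eqref{eq:nerovnost} ``actually expresses $\TuT(n)$ as a sum of non-negative contributions indexed by factors of length $n$'' and that any failure of unioccurrence produces a strictly positive contribution at some $n$, but you give no argument for either claim and explicitly label the required bookkeeping as ``the main obstacle.'' In the known proofs the slack in \eqref{eq:nerovnost} is not a transparent per-factor sum; it is a global quantity read off a graph built on equivalence classes of special factors (loops versus non-loop edges, connectivity), and the link to richness goes through complete return words of $\Theta$-palindromes (richness $\Leftrightarrow$ all complete return words of $\Theta$-palindromic factors are $\Theta$-palindromes $\Leftrightarrow$ equality for all $n$). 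Without carrying out that analysis, both directions of your equivalence are asserted rather than proved.

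A secondary, fixable inaccuracy: for $\Theta\neq \Tr$ the clean statement ``$D_{\Theta}(wa)-D_{\Theta}(w)=0$ iff the longest $\Theta$-palindromic suffix of $wa$ is unioccurrent in $wa$'' is not quite right, because the term $\gT$ can jump when a letter $a$ with $a\neq\Theta(a)$ occurs for the first time, and $wa$ may have only the empty word as a $\Theta$-palindromic suffix; this is precisely the exceptional case spelled out in Proposition~\ref{pseudodef_ups}, and any complete write-up along your lines must incorporate it.
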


\subsection{$\Theta$-palindromic defect}

The \textit{$\Theta$-palindromic defect of a
finite word} $w$, denoted $\DT(w)$, is defined as
$$
\DT(w) = |w|+1 - \gamma_{\Theta}(w) -\# \PalT(w)\,.
$$
It directly follows that $w$ is $\Theta$-rich if and only if $\DT(w) = 0$.
The \textit{$\Theta$-palindromic defect of an infinite word} $\uu$ is defined as
$$
\DT(\uu) = \sup \{ \DT(w) \mid w \in \Lu \}.
$$
Again, $\uu$ is $\Theta$-rich if and only if $\DT(\uu) = 0$. We
say that $\uu$ is \textit{almost $\Theta$-rich} if its defect $\DT(\uu)$ is
finite. A close relation between  $\Theta$-rich  and almost
$\Theta$-rich words is explained in \cite{PeSta_Milano}.

The following proposition is a direct consequence of the
definition, cf. \cite{PeSta_Milano,Sta2010}.   It is analogous to
the
case of finite $\Tr$-defect treated in \cite{GlJuWiZa}. 

\begin{proposition} \label{pseudodef_ups}
If $\uu$ is an infinite word,
then the $\Theta$-defect of $\uu$ is finite
if and only if
there exists an integer $H$ such that the longest $\Theta$-palindromic suffix of any prefix $w$ of $\uu$
such that $|w| \geq H$ occurs in $w$ exactly once, except for prefixes having the form $w = pa$
with $a \in \A$ such that $\gT(p) \neq \gT(w)$, i.e., the letter $a$ or $\Theta(a)$, $a \neq \Theta(a)$, does not occur in $p$.
\end{proposition}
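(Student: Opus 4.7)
The plan is to track the $\Theta$-defect of successive prefixes of $\uu$ and bound each increment in terms of two observable jumps. Write $D_n := \DT(w_n)$ where $w_n$ is the prefix of $\uu$ of length $n$, and let $a := u_n$ so that $w_{n+1} = w_n a$. From the definition of the defect,
\[
D_{n+1}-D_n \;=\; 1 - \bigl(\gT(w_{n+1}) - \gT(w_n)\bigr) - \bigl(\#\PalT(w_{n+1}) - \#\PalT(w_n)\bigr).
\]
Two preparatory facts control the right-hand side. First, $\gT(w_{n+1})-\gT(w_n) \in \{0,1\}$, with the value $1$ occurring precisely in the exceptional case of the statement, namely when $a$ (with $\Theta(a)\neq a$) introduces a new pair $\{a,\Theta(a)\}$. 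Second, $\#\PalT(w_{n+1})-\#\PalT(w_n) \in \{0,1\}$: at most one new $\Theta$-palindromic factor can appear when $a$ is appended, namely the longest $\Theta$-palindromic suffix (LPS) of $w_{n+1}$, and it is genuinely new if and only if it is unioccurrent in $w_{n+1}$. This is the $\Theta$-analogue of the classical Droubay--Justin--Pirillo observation, proved exactly as in the classical case since the symmetry $\Theta(q)=q$ forces every $\Theta$-palindromic suffix of $q$ to also be a prefix of $q$; the statement for arbitrary involutive antimorphisms is available in \cite{Sta2010}. Combining, $D_{n+1}-D_n \in \{-1,0,1\}$, and inspection of the four cases $(\Delta\gT,\Delta\#\PalT)$ shows that $D_{n+1}-D_n = +1$ if and only if $\gT$ is constant at step $n$ \emph{and} the LPS of $w_{n+1}$ is not unioccurrent.

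For the direction $\DT(\uu)<\infty \Rightarrow$ the existence of $H$, observe that $\gT(w_n)$ is non-decreasing in $n$ and bounded above by $\tfrac12\#\A$, so it stabilizes from some index $N_0$. Beyond $N_0$ one has $\Delta\gT = 0$, hence $D_{n+1}-D_n \in \{0,1\}$; being also bounded from above by hypothesis, the non-decreasing sequence $(D_n)_{n \geq N_0}$ must become constant at some $N \geq N_0$. For $n \geq N$ the equality $D_{n+1}-D_n = 0$ forces $\#\PalT(w_{n+1})-\#\PalT(w_n) = 1$, so the LPS of $w_{n+1}$ is unioccurrent. Taking $H := N+1$ yields the required property, with the exceptional clause of the statement being automatically vacuous for $n \geq N_0$.

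For the converse, assume the stated condition holds with constant $H$ and fix $n \geq H-1$. If $\gT(w_{n+1}) \neq \gT(w_n)$ the four-case check gives $D_{n+1}-D_n \in \{-1,0\}$; otherwise the hypothesis says the LPS of $w_{n+1}$ is unioccurrent, so $\#\PalT$ grows by $1$ and $D_{n+1}-D_n = 0$. In all cases $D_{n+1} \leq D_n$, so $(D_n)_{n \geq H-1}$ is non-increasing and $\DT(\uu) = \max_{n<H}D_n$ is finite. The main obstacle in executing this plan is really just the ``at most one new $\Theta$-palindromic factor per letter'' lemma; once it is in hand, the rest is a straightforward bookkeeping of the four $(\Delta\gT,\Delta\#\PalT)$ possibilities.
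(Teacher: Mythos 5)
Your increment bookkeeping and the forward direction are sound: the identity $D_{n+1}-D_n = 1-\bigl(\gT(w_{n+1})-\gT(w_n)\bigr)-\bigl(\#\PalT(w_{n+1})-\#\PalT(w_n)\bigr)$, the Droubay--Justin--Pirillo-type lemma for involutive antimorphisms (at most one new $\Theta$-palindromic factor per appended letter, and it is new exactly when the longest $\Theta$-palindromic suffix is unioccurrent), and the stabilization of $\gT$ do yield the required $H$ when $\DT(\uu)<\infty$. Note that the paper gives no proof of this proposition (it refers to the literature), so your argument is judged on its own merits.

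The converse, however, has a genuine gap: you bound only the defects of \emph{prefixes}, whereas the paper defines $\DT(\uu)=\sup\{\DT(w)\mid w\in\Lu\}$ over \emph{all factors}, and your final claim ``$\DT(\uu)=\max_{n<H}D_n$'' silently identifies the two. The missing ingredient is monotonicity of the $\Theta$-defect under factor inclusion, i.e.\ $\DT(v)\le\DT(w)$ whenever $v$ is a factor of $w$. For right extensions this follows from your own case table once you observe that the case $(\Delta\gT,\Delta\#\PalT)=(1,1)$ never occurs: a nonempty $\Theta$-palindromic suffix of $w_n a$ ends in $a$, hence begins with $\Theta(a)$, which by assumption does not occur in $w_n$, and $a$ alone is not a $\Theta$-palindrome; so appending a letter on the right never decreases the defect (your write-up even allows $D_{n+1}-D_n=-1$ in the exceptional case, which is in fact impossible). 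For left extensions one uses $\DT(\Theta(x))=\DT(x)$ ($\Theta$ preserves length, preserves $\gT$, and permutes the $\Theta$-palindromic factors bijectively), so $\DT(aw)=\DT\bigl(\Theta(w)\Theta(a)\bigr)\ge\DT\bigl(\Theta(w)\bigr)=\DT(w)$. With this lemma every factor of $\uu$ has defect at most that of a prefix containing it, the supremum over factors equals the supremum over prefixes, and your computation that $(D_n)_{n\ge H-1}$ is non-increasing completes the proof; without it, the last step of your converse is an unjustified leap.
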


In \cite{BaPeSta3} and \cite{GlJuWiZa}, various properties
of words with finite $\Tr$-defect were shown. The $\Tr$-defect of
periodic words was studied in \cite{BrHaNiRe}. The following
proposition stated for  any involutive antimorphism is an analogue
of one of these properties stated for the reversal mapping.  We
provide a short proof.

\begin{proposition} \label{pseudodef_crw}
If $\uu$ is an infinite word with finite $\Theta$-defect, then
there exists an integer $H$ such that all complete return words of
any $\Theta$-palindrome of length at least $H$ are
$\Theta$-pa\-lindromes.
\end{proposition}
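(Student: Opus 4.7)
The plan is to leverage Proposition~\ref{pseudodef_ups} together with a recursive descent inside $\uu$. Let $H_{0}$ be the constant supplied by that proposition and choose $H\ge H_{0}$ large enough that no prefix of $\uu$ of length $\ge H$ is of the exceptional form $pa$ with $\gT(p)\ne\gT(pa)$ (possible since only finitely many prefixes are exceptional). Fix a $\Theta$-palindrome $p$ with $|p|\ge H$ and any complete return word $v$ of $p$; the goal is to show $v=\Theta(v)$. Pick an arbitrary occurrence of $v$ in $\uu$ and let $w$ be the prefix of $\uu$ ending at this occurrence, so $|w|\ge|v|>|p|\ge H$. Writing $q$ for the longest $\Theta$-palindromic suffix of $w$, Proposition~\ref{pseudodef_ups} guarantees that $q$ appears in $w$ exactly once. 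Since $p$ is a $\Theta$-palindromic suffix of $w$ with at least two occurrences in $w$ (as the prefix and the suffix of the embedded $v$), uniqueness of $q$ rules out $q=p$, and thus $|q|>|p|$.

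Because $q=\Theta(q)$ and $p=\Theta(p)$, the palindrome $p$ is also a prefix of $q$. If $|q|\le|v|$, then $q$ is a suffix of $v$ and the prefix $p$ of $q$ provides an occurrence of $p$ in $v$ at position $|v|-|q|$. The complete return word condition on $v$ forces this position to be $0$ or $|v|-|p|$; the latter would give $|q|=|p|$, which is excluded, so $|q|=|v|$, whence $v=q=\Theta(q)=\Theta(v)$.

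The main obstacle is the remaining case $|q|>|v|$. Here $v$ is a proper suffix of $q$, so $\Theta(v)$ is a proper prefix of $q$, yielding an earlier occurrence of $\Theta(v)$ in $\uu$. A short check using the involutive antimorphism property shows that $\Theta(v)$ is itself a complete return word of $p$: the two positions of $p$ in $v$ correspond under the palindromic symmetry of $q$ precisely to the two positions of $p$ in $\Theta(v)$. Let $w^{(1)}$ be the prefix of $\uu$ ending at this occurrence of $\Theta(v)$; then $|w^{(1)}|=|w|-|q|+|v|<|w|$, while $|w^{(1)}|\ge|v|\ge H$, so Proposition~\ref{pseudodef_ups} applies to $w^{(1)}$ with $\Theta(v)$ playing the role of $v$. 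Iterating produces a strictly decreasing sequence $|w|>|w^{(1)}|>|w^{(2)}|>\cdots$ of prefix lengths, each bounded below by $|v|$, which must terminate in finitely many steps by falling into the case $|q^{(k)}|\le|v|$ treated above. That terminating case forces the complete return word at the end of $w^{(k)}$, namely $v$ or $\Theta(v)$, to be a $\Theta$-palindrome, and by involutivity of $\Theta$ this is equivalent to $v=\Theta(v)$.
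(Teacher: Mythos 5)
Your proof is correct and follows essentially the same route as the paper: both rest on Proposition~\ref{pseudodef_ups}, the observation that the longest $\Theta$-palindromic suffix $q$ of the relevant prefix must be strictly longer than $p$ (else unioccurrence fails), the impossibility of a third, interior occurrence of $p$ in a complete return word when $|p|<|q|<|v|$, and the fact that when $q$ overshoots, $\Theta(v)$ is an earlier-occurring complete return word of $p$. The only difference is bookkeeping: you organize the last case as a well-founded descent over shorter and shorter prefixes, whereas the paper achieves the same termination by choosing at the outset the first-occurring non-$\Theta$-palindromic complete return word and deriving a contradiction.
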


\begin{proof}
Let $H$ be the constant from Proposition \ref{pseudodef_ups}.
Suppose there exists a $\Theta$-palindrome $p \in \Lu$ such that $|p| \geq H$
and $p$ has a non-$\Theta$-palindromic complete return word.
Let $r$ denote the non-$\Theta$-palindromic complete return word of $p$ that occurs in $\uu$ before any other non-$\Theta$-palindromic complete return word of $p$.
Let $q$ be the prefix of $\uu$ ending with the first occurrence of $r$ in $\uu$, i.e., $q = tr$ for some word $t$ and $r$ is unioccurrent in $q$.
Denote by $s$ the longest $\Theta$-palindromic suffix of $q$.
Since $p$ is a $\Theta$-palindromic suffix of $q$, it is clear
that $|s| \geq |p|$. If $|s| = |p|$, then we have a contradiction
to the unioccurrence of $s$. If $|r| > |s| > |p|$, then we can
find at least $3$ occurrences of $p$ in $r$ which is a
contradiction to $r$ being a complete return word of $p$.
The equality $|r| = |s|$ contradicts the fact that we
supposed $r$ to be non-$\Theta$-palindromic.
Finally, if $|r| < |s|$, then we can find an occurrence of $\Theta(r)$ which is a non-$\Theta$-palindromic complete return word of $p$ and we have a contradiction to the choice of $r$.
\end{proof}

\section{Overlapping factors in infinite words with finite $\Theta$-defect}

Infinite words with finite $\Theta$-defect contain a lot of $\Theta$-palindromes.
We show that they are very rich in overlapping factors as well.

\begin{thm}\label{ctverce}

If $\uu$ is a recurrent word with finite $\Theta$-defect, then
$\uu$ contains  infinitely many overlapping  factors, i.e., the
set
$$
\left \{ www' \in \L(\uu) \mid w' \text{ is a non-empty prefix of
} w \right \}
$$
is infinite.
\end{thm}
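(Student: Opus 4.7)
The plan is to exploit Proposition~\ref{pseudodef_crw}, which provides an integer $H$ such that every complete return word of every $\Theta$-palindromic factor of $\uu$ of length at least $H$ is itself a $\Theta$-palindrome. I first note that $\uu$ must contain $\Theta$-palindromic factors of arbitrarily large length: since $\DT(\uu) < \infty$, every prefix of $\uu$ of length $N$ contains at least $N+1-\gT-\DT(\uu)$ distinct $\Theta$-palindromic factors, forcing their lengths to be unbounded; equivalently, one can iterate the complete-return-word construction, each iteration producing a strictly longer $\Theta$-palindrome by Proposition~\ref{pseudodef_crw} and the recurrence of $\uu$.

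Given an arbitrary target length $M$, the core step is to produce an overlap factor of length greater than $M$. Pick a $\Theta$-palindromic factor $p$ with $|p| \geq \max(H, M)$, and let $r = qp$ be any complete return word of $p$, where $q$ is the return word. By Proposition~\ref{pseudodef_crw}, $r$ is a $\Theta$-palindrome, so $r = \Theta(r) = \Theta(p)\Theta(q) = p\Theta(q)$; thus $p$ appears in $r$ at position $0$ and at position $|q|$. When $|q| < |p|$, the word $r$ has period $|q|$ and length $|r|=|q|+|p| > 2|q|$, so $r$ factors as $r = q q u'$ with $u'$ the non-empty prefix of $q$ of length $|p|-|q|$. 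Hence $r$ itself is an overlap factor, and $|r| > |p| \geq M$. Letting $M$ grow produces overlap factors of unbounded length, and therefore infinitely many of them.

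The main obstacle is to rule out the alternative in which every complete return word of $p$ satisfies $|q| \geq |p|$. In that case, writing $q = p\tilde s$ and equating the two descriptions $r = qp = p\tilde s p$ and $r = p\Theta(q) = p\Theta(\tilde s) p$ forces $\tilde s = \Theta(\tilde s)$, so each spacer is a $\Theta$-palindrome and $\uu$ admits a rigid decomposition $\cdots p s_1 p s_2 p s_3 \cdots$ in which $p$ and all $s_i$ are $\Theta$-palindromes. I would then apply the same dichotomy to the longer $\Theta$-palindrome $P = p s_1 p$: two occurrences of $P$ at $p$-positions differing by one (that is, $s_k = s_{k+1} = s_1$) would immediately yield an overlap factor of length $3|p|+2|s_1|$, so no two consecutive spacers can coincide. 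Iterating on longer palindromic patterns built from the $s_i$-sequence, together with the constraint $s_k \neq s_{k+1}$ and its higher-level analogues, produces either the desired overlap factor or a contradiction with the unbounded growth of $\Theta$-palindromic factors in $\uu$. This combinatorial analysis of the palindromic spacer sequence is the technical heart of the proof and the step I expect to require the most care.
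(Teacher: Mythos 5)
Your first case (some complete return word $r=qp$ of a long $\Theta$-palindrome $p$ has $|q|<|p|$) is essentially sound, up to a small slip: the leftover $u'$ has length $|p|-|q|$, which may exceed $|q|$, so $r$ need not literally be $qqu'$ with $u'$ a prefix of $q$; one repairs this by writing $r=q^jy$ with $y$ a proper prefix of $q$ and taking $w=q^{\lfloor j/2\rfloor}$, exactly as the paper does. The genuine gap is the case you yourself flag as the ``technical heart'': when every complete return word of $p$ has $|q|\ge|p|$, the identity $qp=p\Theta(q)$ only yields two non-overlapping occurrences of $p$ separated by a $\Theta$-palindromic spacer, and your proposed iteration over the spacer sequence (``either an overlap or a contradiction with the unbounded growth of $\Theta$-palindromes'') is an outline, not an argument: no mechanism is given that forces the iteration to terminate or to produce the claimed contradiction, and it is exactly in this case that all the work of the theorem lies.

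The paper avoids your dichotomy by choosing the complete return word much more carefully, so that it comes with a built-in square. Using recurrence it constructs a nested sequence $p_0,p_1,p_2,\ldots$ in which $p_{n+1}$ is a complete return word of $p_n$; by Proposition \ref{pseudodef_crw} all $p_n$ are $\Theta$-palindromes of increasing length. Given $L\ge H$, the pigeonhole principle applied to the finitely many $\Theta$-palindromes of length $2L$ or $2L+1$ gives indices $k<\ell$ such that $p_k$ and $p_\ell$ have the same central $\Theta$-palindromic factor $p$ of that length; consequently $p$ occurs at least three times in $p_\ell$, and if $r$ denotes the return word of $p$ at the occurrence immediately preceding the central occurrence of $p$ in $p_\ell$, then the $\Theta$-symmetry of $p_\ell$ makes $rp\Theta(r)$ a central factor of $p_\ell$, while the palindromicity of $rp$ (i.e. $rp=p\Theta(r)$) turns it into $rrp\in\L(\uu)$. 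This settles both regimes at once: if $|r|\ge|p|$ then $p$ is a prefix of $r$, so $rrp$ is itself of the form $www'$ with $w=r$ and $|w|\ge 2L$; if $|r|<|p|$ one takes $w=r^{\lfloor j/2\rfloor}$ as above and gets $|w|\ge\frac12|p|\ge L$. The ingredient your proposal is missing is precisely this device for manufacturing two adjacent copies of the return word -- a bare complete return word contains only one copy of $q$ when $|q|\ge|p|$ -- so the hard case remains unproved in your write-up, and I would either adopt the paper's nested-return-word and central-factor argument or supply a complete, self-contained analysis of your spacer sequence before considering the proof finished.
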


\begin{proof}
According to Proposition \ref{pseudodef_crw} there exists  an integer $H$
such that
\begin{itemize}
\item there exists a $\Theta$-palindrome $p_0\in \L(\uu)$ of length $|p|>H$,
 \item any complete return word of a $\Theta$-palindromic factor $v \in
\L(\uu)$ of length $|v| \geq H$ is a $\Theta$-palindrome.
\end{itemize}
We use the $\Theta$-palindrome $p_0$ as the starting element of a
sequence of  factors $(p_n)$ constructed in the following way:\\
\centerline{ for any $n\in \mathbb{N}$, $p_{n+1}$  is a
complete return word of $p_n$.}

\medskip

\noindent  Since the word $\uu$ is recurrent, any factor has at
least one complete return word and therefore our construction is
correct.

Let $L \geq H $ be an arbitrary integer.
We are going to find a factor $w$, $|w| \geq L$, such that $www'
\in \Lu$, with $w'$ being a non-empty prefix of $w$.

Since the set $\{ v \in \L_{2L}(\uu) \cup \L_{2L+1}(\uu) \mid v =
\Theta(v) \} $ is finite, there are indices $k$ and $\ell$, $k <
\ell$, such that $p_k$ and $p_\ell$ have the same central
$\Theta$-palindromic factor of length $2L$ or $2L+1$. Let $p$
 be that central factor. Let us recall that $p\in
\L(\uu)$ is a \textit{central factor} of a $\Theta$-palindrome
$v$ if $v=wp\Theta(w)$ for some finite word $w$.

According to the construction of the sequence $(p_n)$, $p$ occurs
in $p_\ell$ at least $3$ times. Let $r$ denote the return word of
$p$ occurring at the rightmost occurrence of $p$ before the
central occurrence of $p$ in $p_\ell$. Since $p_\ell$ and $rp$ are
$\Theta$-palindromes, it is clear that $rp\Theta(r) = rrp$ is a
central factor of $p_\ell$.

If $|r| \geq |p|$, then $p$ is a prefix of $r$.
Since $rrp \in \L(\uu)$, we set $w = r$ and we have directly $|w| \geq 2L$.

If $|r| < |p|$, then there exist an integer $j \geq 3$ and a factor $y$ such that $0 < |y| \leq |r|$,
$rrp = r^jy$, and $y$ is a prefix of $r$. Set $w = r^{\lfloor \frac{j}{2} \rfloor}$.
It is clear that $wwy \in \L(\uu)$.
The length of $w$ satisfies $|w| > \frac{1}{2} (j-1) |r| \geq \frac{1}{2} |p| \geq L$.

\end{proof}

\begin{remark}
Existence of squares in almost rich words was an important
ingredient in proving the Brlek-Reutenauer conjecture for
uniformly recurrent words in \cite{BaPeSta4}, where the authors
together with L. Balkov\'a  deduced a weaker form of Theorem \ref{ctverce} for uniformly recurrent words only. Let us mention
that in \cite{BrRe-conjecture}  Brlek and Reutenauer stated  the conjecture
for any word with language closed under reversal. It is
yet to be proved.
\end{remark}

\begin{remark} \label{rich_long}
Theorem \ref{ctverce} implies that any infinite $\Theta$-rich word contains an infinity of squares.
One can look for the longest finite words that are $\Theta$-rich and
do not contain a square.
For instance take $\Theta = \Tr$.
On a binary alphabet those longest words are clearly $010$ and $101$.
On a ternary alphabet there are exactly two words, up to a permutation of letters, that satisfy these conditions.
Namely $0102010$ and $0121012$.
Let $r(n)$ denote the length of such a word on an alphabet of $n$ letters.
The sequence $\big( r(n) \big )_{n=1}^{+\infty}$ begins with
$$
1, 3, 7, 15, 33, 67 \ldots
$$
To find an explicit formula for $r(n)$ remains an open question.

\end{remark}

It is  widely accepted that combinatorics on words has started its
own life with the discovery (or rediscovery) of an overlap-free word
by Axel Thue in 1912. This word, today called Thue-Morse (or
Prouhet-Thue-Morse), is the fixed point
$\lim\limits_{n\to\infty}\varphi^{n}(0)$ of the morphism
$$ \varphi( 0)= 01\quad \hbox{and} \quad  \varphi(1)=10\,.$$
The Thue-Morse word
$$
\uu_{TM} =
0110100110010110100101100110100110010110011010010110100110\ldots
$$
has language closed under reversal  and contains infinitely many
(classical) palindromes.  Moreover,  $\L(\uu_{TM})$ is closed  under
permutation of letters $0$ and $1$ and thus under a second
antimorphism $\Theta$ defined by $\Theta(0)=1$ and $\Theta(1)=0$.
The Thue-Morse word contains   infinitely many
$\Theta$-palindromes as well. Nevertheless, absence of overlapping
factors  in $ \uu_{TM}$ implies the following corollary,
which is a rephrasing of a result in \cite{BlBrGaLa}.

\begin{corollary}
The Thue-Morse word is not almost $\Theta$-rich for any
antimorphism $\Theta$.
\end{corollary}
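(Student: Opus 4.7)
The plan is to combine Theorem~\ref{ctverce} with Thue's classical theorem that $\uu_{TM}$ is overlap-free, after a short bookkeeping step identifying which antimorphisms $\Theta$ can conceivably make $\uu_{TM}$ almost $\Theta$-rich.

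First, I would enumerate the candidates for $\Theta$. Using the decomposition $AM(\A^*)=M(\A^*)\cup \Tr\bigl(M(\A^*)\bigr)$ on the binary alphabet $\A=\{0,1\}$, together with the fact that the only involutive non-erasing morphisms on $\A^*$ are $\id$ and the letter-swap $E\colon 0\leftrightarrow 1$, one sees that there are exactly two involutive antimorphisms on $\A^*$, namely $\Tr$ and $E\circ\Tr$. By the remark in the preliminaries, a non-erasing antimorphism admitting a fixed point containing all letters must be involutive, so these are the only antimorphisms for which $\Theta$-richness of $\uu_{TM}$ is meaningful. The text immediately preceding the corollary records that $\L(\uu_{TM})$ is closed under both.

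Second, I would note that $\uu_{TM}$ is a fixed point of the primitive morphism $\varphi$, hence uniformly recurrent and in particular recurrent. Therefore the hypotheses of Theorem~\ref{ctverce} are met as soon as the $\Theta$-defect is assumed finite.

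The proof is then a one-line contradiction. Assume $\uu_{TM}$ has finite $\Theta$-defect for one of the two antimorphisms above. Theorem~\ref{ctverce} supplies infinitely many factors $www'\in\L(\uu_{TM})$ with $w'$ a non-empty prefix of $w$; writing $w=au$ with $a$ the first letter of $w$, the factor $wwa=auaua$ belongs to $\L(\uu_{TM})$ and is an overlap in Thue's sense, contradicting the overlap-freeness of $\uu_{TM}$. I do not foresee any real obstacle: the only genuine subtlety is to make the finite list of involutive antimorphisms on the binary alphabet explicit and to observe that finite $\Theta$-defect of the recurrent word $\uu_{TM}$ immediately triggers Theorem~\ref{ctverce}.
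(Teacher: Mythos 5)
Your proposal is correct and follows essentially the same route as the paper: the corollary is exactly the contrapositive of Theorem~\ref{ctverce} applied to the recurrent, overlap-free word $\uu_{TM}$, with the preliminary observation that only involutive antimorphisms are relevant. Your extra bookkeeping (listing $\Tr$ and the exchange antimorphism explicitly, and extracting the overlap $auaua$ from $www'$) just makes explicit what the paper leaves as a one-line remark.
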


\begin{example}\label{dvaKratRich} Let us consider the periodic word
 $\uu = (01)^\omega$ and  denote by
 $\Theta$ the antimorphism on $\{0,1\}^*$  defined by $\Theta(0)=1$ and $\Theta(1)=0$.  Obviously
\begin{itemize}
 \item  $\mathcal{C}(n) = 2$  for any $n\geq
  1$;

\item  $\mathcal{P}_{\Tr}(2n)=0$ and
$\mathcal{P}_{\Tr}(2n-1)=2$ for any for any $n\geq 1$;

\item  $\mathcal{P}_{\Theta}(2n)=2$ and
$\mathcal{P}_{\Theta}(2n-1)=0$ for any for any $n\geq 1$.
\end{itemize}
 Therefore, the periodic word
 $\uu = (01)^\omega$
is $\Tr$-rich and   $\Theta$-rich simultaneously.

\end{example}

\section{Words with language closed under two antimorphisms}

The inequality \eqref{eq:nerovnost} can be interpreted as a lower
bound on the increment  $\Delta \mathcal{C}(n)$ of the factor
complexity. The more palindromes of length $n$ and $n+1$ are contained in $\uu$, the higher the value $\Delta \mathcal{C}$.  This bound is weak when the language of a word
$\uu$ is closed under two antimorphisms.  We show that in this case
$\Delta \mathcal{C}(n)$ can be estimated more effectively.

\begin{thm}\label{nerovnostProDva}
Let $\Theta_1$ and $\Theta_2$ be two distinct commuting
involutive antimorphisms on $\mathcal{A}^*$. If $\uu$ is an
infinite word with language closed under $\Theta_1$ and
$\Theta_2$, then we have for all $ n\in \N^+$,
$$
\Delta \C (n) + 4 \geq \P_{\Theta_1}(n) + \P_{\Theta_2}(n) -
\P_{\Theta_1,\Theta_2 }(n) + \P_{\Theta_1}(n+1) +
\P_{\Theta_2}(n+1) -\P_{\Theta_1,\Theta_2 }(n+1), $$
 where $\P_{\Theta_1,\Theta_2
}(k)=\#\{ w\in \L_k(\uu)\mid w=\Theta_1(w) = \Theta_2(w)\}$.

\end{thm}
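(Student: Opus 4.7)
Since $\Theta_1$ and $\Theta_2$ are commuting involutive antimorphisms, their composition $\varphi := \Theta_1 \Theta_2 = \Theta_2 \Theta_1$ is an involutive morphism, so $G := \{\id, \Theta_1, \Theta_2, \varphi\}$ is the Klein four-group acting on $\L(\uu)$. The plan is to adapt the proof of inequality \eqref{eq:nerovnost} to this richer group action. I first classify the $G$-orbits of $\L_k(\uu)$ by their stabilizer: let $a(k), b(k), c(k), d(k), e(k)$ count the orbits with stabilizers $G$, $\{\id,\Theta_1\}$, $\{\id,\Theta_2\}$, $\{\id,\varphi\}$, $\{\id\}$ respectively. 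The orbit--stabilizer theorem yields
$$\C(k) = a(k) + 2b(k) + 2c(k) + 2d(k) + 4e(k),$$
together with $\P_{\Theta_1}(k) = a(k) + 2b(k)$, $\P_{\Theta_2}(k) = a(k) + 2c(k)$, and $\P_{\Theta_1,\Theta_2}(k) = a(k)$. Substituting these into the target inequality and cancelling common terms reduces it to the equivalent combinatorial statement
$$d(n+1) + 2e(n+1) + 2 \;\ge\; a(n) + 2b(n) + 2c(n) + d(n) + 2e(n).$$

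To establish this reduced inequality, I would mimic the extension-counting argument behind \eqref{eq:nerovnost}. Starting from $\Delta \C(n) = \sum_{w \in \L_n(\uu)} (\#\Rext(w) - 1)$, I would group the terms by $G$-orbit of $w$, using that each antimorphism $\Theta_i$ gives a bijection $\Rext(w) \to \Lext(\Theta_i(w))$ via $b \mapsto \Theta_i(b)$, while the morphism $\varphi$ preserves extension direction. This pins down $\#\Rext$ within each orbit and shows that every $\Theta_1$- or $\Theta_2$-palindromic vertex at length $n$ or $n+1$ forces excess edges in the Rauzy graph landing in orbits of type $d$ or $e$ at level $n+1$. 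The main obstacle is the ensuing case analysis: one must track how edges (factors of length $n+1$) incident to palindromic vertices distribute among the five orbit types at level $n+1$, and verify that palindromic vertices at level $n$ force the claimed multiplicities. The commutation $\Theta_1 \Theta_2 = \Theta_2 \Theta_1$ is essential here, because it ensures that a factor fixed by both antimorphisms (stabilizer $G$) is counted only once -- the algebraic source of the term $-\P_{\Theta_1, \Theta_2}$ on the right-hand side -- while the constant $+4$, replacing $+2$, absorbs one \emph{distinguished} contribution from each of the two antimorphisms.

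As a sanity check, I would verify the reduced inequality against Example \ref{dvaKratRich}, where $\uu = (01)^\omega$ gives $b(n) = 1$ for odd $n \ge 1$, $c(n) = 1$ for even $n \ge 2$, and all other orbit counts zero; both sides of the reduced inequality then equal $2$ for every $n \ge 1$, matching the equality attained in the statement of the theorem.
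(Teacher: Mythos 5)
Your orbit--stabilizer bookkeeping is correct and the reduction is algebraically equivalent to the theorem: with $\C(k)=a(k)+2b(k)+2c(k)+2d(k)+4e(k)$, $\P_{\Theta_1}(k)=a(k)+2b(k)$, $\P_{\Theta_2}(k)=a(k)+2c(k)$, $\P_{\Theta_1,\Theta_2}(k)=a(k)$, the claimed inequality does reduce to $d(n+1)+2e(n+1)+2\geq a(n)+2b(n)+2c(n)+d(n)+2e(n)$. But from that point on you have only a plan, not a proof: you write that you ``would mimic'' the extension-counting argument behind \eqref{eq:nerovnost} and that ``the main obstacle is the ensuing case analysis,'' and that case analysis is precisely the content of the theorem. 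The sanity check on $(01)^\omega$ confirms consistency but proves nothing.

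The missing idea is global, not local, and your sketch does not contain it. The constant $2$ in your reduced inequality (the $+4$ in the theorem) does not come from ``one distinguished contribution from each antimorphism''; in the paper it comes from a connectivity estimate on a quotient Rauzy-type graph. Concretely, the paper builds the directed graph whose vertices are the $G$-classes $[w]$ of \emph{special} factors of length $n$ and whose edges are factors containing exactly two occurrences of special factors of length $n$; it splits the edges into loops ($A$ of them) and class-connecting edges ($B$ of them), proves $B\geq 4(\alpha+\beta)+2\zeta-4$ using connectedness of this graph (which rests on recurrence, itself a consequence of closure under an antimorphism) together with the fact that connected distinct classes are joined by at least $4$, resp.\ $2$, edges according to class sizes, and proves $A+2\alpha+\zeta\geq \P_{\Theta_1}(n)+\P_{\Theta_2}(n)-\P_{\Theta_1,\Theta_2}(n)+\P_{\Theta_1}(n+1)+\P_{\Theta_2}(n+1)-\P_{\Theta_1,\Theta_2}(n+1)$ because every $\Theta_i$-palindrome of length $n+1$, and every non-special one of length $n$, lies on a unique edge and forces that edge to be a loop. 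Combining with $\Delta\C(n)=\#\overrightarrow{E}-4\beta-2\alpha-\zeta$, which is \eqref{DeltaC} grouped by class size, gives the theorem. Your sketched mechanism (``palindromic vertices force excess edges landing in orbits of type $d$ or $e$ at level $n+1$'') is not the mechanism that works: purely local counting of extensions within orbits cannot produce the additive constant $2$, which plays the role of the ``$|V|-1$ edges of a spanning tree'' in the connectivity bound. Until you supply an argument of this tree/connectivity type (or an equivalent), the proof has a genuine gap.
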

\begin{proof}
We write $w\sim v$ if $w$ is equal to $ v$ or $\Theta_1(v)$ or
$\Theta_2(v)$ or $\Theta_1\Theta_2(v)$. Since the antimorphisms
$\Theta_1$ and $\Theta_2$ are commuting, it is easy to see that
$\sim$ is an equivalence relation on $\L(\uu)$. An equivalence class
containing a factor $w$ is denoted by $[w]$.

Note that
\begin{itemize}
  \item $\#[w] = 1$,  if $w$ is  simultaneously a $\Theta_1$-palindrome and $\Theta_2$-palindrome;
  \item $\#[w] = 2$,   if $w$ is a $\Theta_1$-palindrome or
 a  $\Theta_2$-palindrome but not both;
  \item $\#[w] = 4$, otherwise.
\end{itemize}
A factor $w$ is RS or LS if and only if any factor from $[w]$ is
RS or LS.

Fix $n$. We are going to construct a directed graph
$\overrightarrow{\Gamma}= (V,\overrightarrow{E})$ with multiple edges
and loops allowed. The set $V$ of vertices of $\overrightarrow{\Gamma}$
is the set
$$V=\left \{ [w] \mid w \in \L_n(\uu), w \text{ is
special} \right \}\,.$$
 There is an edge  labeled $e \in \L(\uu)$
going from $[v]$ to $[w]$ if the prefix of $e$ of length $n$
belongs to $[w]$,  the suffix  of $e$ of length $n$ belongs to
$[v]$, and $e$ contains exactly two special factors of length $n$.
For the number of edges in $\overrightarrow{E}$ we have
$$ \# \overrightarrow{E} = \sum_{w \in \L_n(\uu), w \text{ special}} \# \Lext (w)\,.$$
Obviously
$\Theta_1(\overrightarrow{E})=\Theta_2(\overrightarrow{E})=
\overrightarrow{E}$.

Note that if $e$ is an edge between $[v]$ and $[w]$, and $\#[v] >
1$ or $\# [w] > 1$, then $\Theta_1(e)$, $\Theta_2(e)$ and
$\Theta_1\Theta_2(e)$ are also edges between $[v]$ and $[w]$ and
are distinct. In the case $\#[v] = 1$ and  $\# [w] = 1$, there are
at least two edges $e$ and $\Theta_1(e)$ between $[v]$ and $[w]$.

Let $\alpha$ denote the number of vertices $[w]$ such that $\# [w]
= 2$, $\beta$ the number of vertices $[w]$ such that $\# [w] = 4$
and $\zeta$ the number of vertices $[w]$ such that $\# [w] = 1$.

All edges in $\overrightarrow{\Gamma}$ can be divided into two
disjoint parts. We put all loops, i.e., edges starting and ending
in the same vertex, into one part. Let us denote their number by
$A$. We put all edges connecting distinct vertices into the
second part. Their number is denoted by $B$. Obviously, $\#
\overrightarrow{E} =A+B$.

\begin{description}
\item[Estimate of $B$]\quad  We exploit the  connectivity of
the graph $\overrightarrow{\Gamma}$ to give a lower bound on $B$. We
have to take into consideration that distinct connected  vertices
are connected either by four or two edges, as discussed above.
\begin{itemize}

\item  If $(\alpha+\beta)>0$ and $\zeta>0$, then $B \geq
4(\alpha+\beta-1) + 4 + 2(\zeta-1)$.

\item If $(\alpha+\beta) = 0$, then $B\geq 2(\zeta-1)$.

\item If $\zeta = 0$, then $B\geq  4(\alpha+\beta-1)$.

\end{itemize}
Altogether,

\begin{equation}\label{odhadB}B \geq 4(\alpha+\beta) + 2 \zeta -
4\,.
\end{equation}

\item[Estimate of $A$]\quad  If an edge $e$ contains a
$\Theta_i$-palindrome $p$ which is neither a prefix nor a  suffix
of $e$ of length $n$, then $\Theta_i(e)=e$. Such an edge is a
loop in the graph $\overrightarrow{\Gamma}$, and the
$\Theta_i$-palindrome $p$ is centered in $e$.

On the other hand, any $\Theta_i$-palindrome of length $n+1$ lies
on a unique edge $e$. Similarly, any $\Theta_i$-palindrome of
length $n$ which is not a special factor lies on a unique edge
$e$.  We may conclude that
$$A \geq \# \{w \in \L_n(\uu ) \mid w\  \hbox{is not special}, w= \Theta_1(w)\ \hbox{ or} \   w=
\Theta_2(w)\}$$
$$
  + \# \{w \in \L_{n+1}(\uu) \mid  w= \Theta_1(w) \
\hbox{or}\  w= \Theta_2(w)\}$$ and thus
\begin{equation}\label{odhadA} A + 2\alpha + \zeta \geq
\P_{\Theta_1}(n) + \P_{\Theta_2}(n) - \P_{\Theta_1,\Theta_2 }(n) +
\P_{\Theta_1}(n+1) + \P_{\Theta_2}(n+1) -\P_{\Theta_1,\Theta_2
}(n+1)\,. \end{equation}
\end{description}
To complete  the proof, we have to realize that
$$
\# \overrightarrow{E}  = \sum_{\substack{w \in \L_n(\uu) \\ w \text{ special}}} \# \Lext (w)
=
\sum_{\substack{w \in \L_n(\uu) \\ w \text{ special} \\ \# [w] = 4 }} \# \Lext (w)
+ \sum_{\substack{w \in \L_n(\uu) \\ w \text{ special} \\ \# [w] = 2  }} \# \Lext
(w) + \sum_{\substack{w \in \L_n(\uu) \\ w \text{ special} \\ \# [w] = 1  }} \#
\Lext (w)
$$
and thus
$$
\Delta \C(n) =  \sum_{\substack{w \in \L_n(\uu) \\ w \text{ special}}} \big ( \# \Lext (w) -
1 \big ) = \# \overrightarrow{E} - 4\beta -2\alpha - \zeta = A+B
- 4\beta -2\alpha - \zeta \,.
$$
Using the estimates \eqref{odhadA} and \eqref{odhadB},  we get the
inequality announced  by the Theorem.
\end{proof}

\begin{corollary}\label{odjistehoNplati}
If $\uu$ is a uniformly recurrent infinite word
with language closed under two distinct  commuting involutive
antimorphisms $\Theta_1$ and  $\Theta_2$, then there exists an
integer $N$ such that
$$
\Delta \C (n) + 4 \geq \P_{\Theta_1}(n) + \P_{\Theta_2}(n)  +
\P_{\Theta_1}(n+1) + \P_{\Theta_2}(n+1)\ \ \text{ for all } n >
N \,.
$$
\end{corollary}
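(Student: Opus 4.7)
The plan is to reduce the corollary to Theorem~\ref{nerovnostProDva} by showing that the correction terms $\P_{\Theta_1,\Theta_2}(n)$ and $\P_{\Theta_1,\Theta_2}(n+1)$ vanish for all sufficiently large $n$, i.e.\ that no long factor of $\uu$ is fixed by both antimorphisms simultaneously.

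First I would show that the composition $\Theta_1\Theta_2$ acts as a nontrivial permutation of letters. Since $\L(\uu)$ is closed under each involutive antimorphism $\Theta_i$ and $\A$ is the smallest alphabet of $\uu$, applying $\Theta_i^2 = \id$ to a single letter $a$ and counting lengths shows that $\Theta_i(a)$ is itself a letter; hence $\Theta_i$ restricts to an involutive permutation $\pi_i$ of $\A$, and $\Theta_i = \Tr \circ \pi_i$ where $\pi_i$ is regarded as a morphism. A direct computation shows that morphisms induced by permutations of letters commute with $\Tr$, so
$$\Theta_1\Theta_2 \;=\; \Tr\pi_1\Tr\pi_2 \;=\; \pi_1\pi_2,$$
which is a morphism induced by a permutation $\pi$ of $\A$. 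The hypothesis $\Theta_1 \neq \Theta_2$ forces $\pi_1 \neq \pi_2$ and hence $\pi \neq \id$, so there exists at least one letter $a \in \A$ with $\pi(a) \neq a$.

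Next I would invoke uniform recurrence. If $w \in \L(\uu)$ satisfies $w = \Theta_1(w) = \Theta_2(w)$, then $w = \Theta_1\Theta_2(w) = \pi(w)$, which forces every letter occurring in $w$ to be a fixed point of $\pi$; in particular $a$ does not occur in $w$. Because $\uu$ is uniformly recurrent, the gaps between consecutive occurrences of the letter $a$ in $\uu$ are bounded, so there exists $N \in \N$ such that every factor of $\uu$ of length greater than $N$ contains $a$. Therefore $\P_{\Theta_1,\Theta_2}(n) = 0$ whenever $n > N$, and likewise $\P_{\Theta_1,\Theta_2}(n+1) = 0$.

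Substituting these vanishings into Theorem~\ref{nerovnostProDva} yields the claimed inequality for all $n > N$. The only delicate point is the reduction $\Theta_1\Theta_2 = \pi_1\pi_2$ and the verification that $\pi$ is nontrivial; once that identification is made, the uniform recurrence step is immediate, and no additional combinatorial work on the graph $\overrightarrow{\Gamma}$ is needed.
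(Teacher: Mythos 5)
Your proposal is correct and follows essentially the same route as the paper: identify $\Theta_1\Theta_2$ as a non-identical permutation of letters, use uniform recurrence to force a letter moved by this permutation into every sufficiently long factor, conclude $\P_{\Theta_1,\Theta_2}(n)=0$ for $n>N$, and plug into Theorem~\ref{nerovnostProDva}. The only difference is that you spell out the decomposition $\Theta_i = \Tr\pi_i$ and the commutation of $\Tr$ with letter permutations, details the paper leaves implicit.
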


\begin{proof}
Since $\Theta_1$ and  $\Theta_2$ are two distinct commuting
antimorphisms, their composition $\Theta_1\Theta_2$  is a
non-identical morphism which  permutes letters.  Let $a \in \A$
be a letter such that $\Theta_1\Theta_2(a) \neq a$. Since $\uu$ is
uniformly recurrent we can find an integer $N$ such that $a$
occurs in any factor longer than $N$. The equation $\Theta_1(w) =
\Theta_2(w)$ implies $\Theta_1\Theta_2(w)=w$,  which cannot be
satisfied for words longer than $N$. Thus for all $n > N$
$$
\{ w\in \L(\uu)\mid w=\Theta_1(w) = \Theta_2(w)\ \hbox{and}\
|w|=n\} = \emptyset \quad  \Longrightarrow \quad
\P_{\Theta_1,\Theta_2 }(n) = 0\,.
$$
\end{proof}

\begin{corollary}\label{skorovzdy}
If $\Theta_1$ and  $\Theta_2$ are two distinct commuting
involutive antimorphisms and $\uu$ is a uniformly recurrent
infinite word such that $\uu$ is simultaneously $\Theta_1$-rich
and $\Theta_2$-rich, then $\uu$ is periodic.
\end{corollary}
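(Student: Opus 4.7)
The plan is to combine the hypothesis of simultaneous $\Theta_1$- and $\Theta_2$-richness (which, via Theorem~\ref{pseudo_rich_rovnost}, forces equality in \eqref{eq:nerovnost} for each antimorphism separately) with the stronger inequality of Corollary~\ref{odjistehoNplati} (which is available because $\uu$ is uniformly recurrent and its language is closed under both $\Theta_1$ and $\Theta_2$). The two pieces of information together will squeeze $\Delta\C(n)$ to zero for all sufficiently large $n$, and then Morse--Hedlund plus uniform recurrence will give periodicity.

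Concretely, first I would apply Theorem~\ref{pseudo_rich_rovnost} to both antimorphisms:
\begin{equation*}
\Delta\C(n) + 2 = \P_{\Theta_1}(n) + \P_{\Theta_1}(n+1)
\quad \text{and} \quad
\Delta\C(n) + 2 = \P_{\Theta_2}(n) + \P_{\Theta_2}(n+1)
\end{equation*}
for every $n \geq 1$. Adding the two equalities yields
\begin{equation*}
2\Delta\C(n) + 4 = \P_{\Theta_1}(n) + \P_{\Theta_2}(n) + \P_{\Theta_1}(n+1) + \P_{\Theta_2}(n+1).
\end{equation*}

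Next, I would invoke Corollary~\ref{odjistehoNplati}: there exists $N$ such that for every $n > N$,
\begin{equation*}
\Delta\C(n) + 4 \geq \P_{\Theta_1}(n) + \P_{\Theta_2}(n) + \P_{\Theta_1}(n+1) + \P_{\Theta_2}(n+1) = 2\Delta\C(n) + 4,
\end{equation*}
hence $\Delta\C(n) \leq 0$ for all $n > N$. Since $\C$ is non-decreasing, this forces $\Delta\C(n) = 0$ for all $n > N$, i.e., $\C$ is eventually constant.

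Finally, by the Morse--Hedlund theorem, bounded factor complexity implies that $\uu$ is ultimately periodic; combined with uniform recurrence, $\uu$ must be (purely) periodic. There is no real obstacle here: the whole argument is a short chain of substitutions, and the only subtle point worth stating explicitly is the use of Corollary~\ref{odjistehoNplati} (which requires the uniform recurrence hypothesis in order to kill the $\P_{\Theta_1,\Theta_2}$ terms appearing in Theorem~\ref{nerovnostProDva}).
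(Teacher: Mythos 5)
Your proposal is correct and follows essentially the same route as the paper: add the two richness equalities coming from Theorem~\ref{pseudo_rich_rovnost} and squeeze them against Corollary~\ref{odjistehoNplati} to force $\Delta\C(n)=0$ for all large $n$, hence periodicity. The only detail the paper makes explicit that you take for granted is that closure of $\L(\uu)$ under $\Theta_1$ and $\Theta_2$ is itself deduced from richness together with recurrence rather than being a separate hypothesis.
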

\begin{proof} The $\Theta_1$-richness and recurrence imply that $\L(\uu)$ is closed under $\Theta_1$
and for all $n\geq 1$ we have  $ \Delta \C (n) + 2 =
\P_{\Theta_1}(n) + \P_{\Theta_1}(n+1). $ Analogously, for
$\Theta_2$ we can write $ \Delta \C (n) + 2 = \P_{\Theta_2}(n)   +
\P_{\Theta_2}(n+1)$ for all $ n\geq 1\,.$

 Let $N$ be the integer from
Corollary \ref{odjistehoNplati}. Adding the two previous equalities
and using Corollary \ref{odjistehoNplati}, we obtain  $ 2\Delta \C (n) + 4 \leq
\Delta \C (n) + 4$, i.e., $\Delta \C (n) = 0$ for $ n>N$ and thus $\uu$ is periodic.
\end{proof}

\begin{corollary}\label{komutuji}
Let $\Theta_1$ and  $\Theta_2$ be two distinct commuting
involutive antimorphisms such that their composition
$\Theta_1\Theta_2$ has no fixed letter, i.e., it is a derangement when restricted to $\A$.
If $\uu$ is an infinite word with language closed under $\Theta_1$ and
$\Theta_2$, then
$$
\Delta \C (n) + 4 \geq \P_{\Theta_1}(n) + \P_{\Theta_2}(n)  +
\P_{\Theta_1}(n+1) + \P_{\Theta_2}(n+1)\ \ \text{ for all } n \geq
1 \,.
$$
\end{corollary}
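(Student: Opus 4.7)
The plan is to reduce the statement directly to Theorem \ref{nerovnostProDva} by showing that under the derangement hypothesis, the correction term $\P_{\Theta_1,\Theta_2}(n)$ vanishes for every $n \geq 1$. Comparing the target inequality with the one in Theorem \ref{nerovnostProDva}, the only difference lies in the two terms $-\P_{\Theta_1,\Theta_2}(n)$ and $-\P_{\Theta_1,\Theta_2}(n+1)$ on the right-hand side, so it suffices to prove that $\P_{\Theta_1,\Theta_2}(k) = 0$ for all $k \geq 1$.

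To handle this, I would start from a factor $w \in \L_k(\uu)$ with $w = \Theta_1(w) = \Theta_2(w)$ and apply $\Theta_1$ to the second equality. Since $\Theta_1$ is involutive, this yields $\Theta_1\Theta_2(w) = \Theta_1(\Theta_2(w)) = \Theta_1(w) = w$. The next step is to recall the structural remark made right after the definition of antimorphisms: any involutive antimorphism is $\Tr \circ \pi$ for an involutive letter permutation $\pi$, and letter permutations commute with $\Tr$. Writing $\Theta_i = \Tr \circ \pi_i$, this gives $\Theta_1 \Theta_2 = \pi_1 \pi_2$ as a morphism acting letterwise.

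Consequently $w = \Theta_1\Theta_2(w)$ is the statement that each letter of $w$ is fixed by the permutation $\pi_1 \pi_2$. The hypothesis says that $\Theta_1 \Theta_2$, restricted to $\A$, is a derangement, so no letter is fixed. Hence any such $w$ must be empty, proving $\P_{\Theta_1,\Theta_2}(k) = 0$ for all $k \geq 1$.

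Plugging this into Theorem \ref{nerovnostProDva} gives the required inequality for every $n \geq 1$. The argument is essentially a strengthening of the proof of Corollary \ref{odjistehoNplati}: there one needed uniform recurrence to ensure that a single non-fixed letter eventually occurs in every long enough factor, whereas here the derangement hypothesis removes all fixed letters at once and the conclusion holds from length one on. I do not anticipate a substantive obstacle; the only point requiring a little care is the verification that $\Theta_1 \Theta_2$ is indeed a letter permutation (a morphism), which follows from the fact that the composition of two antimorphisms is a morphism together with the involutivity noted in the preliminaries.
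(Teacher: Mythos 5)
Your proof is correct and follows exactly the route the paper intends: the corollary is stated as an immediate consequence of Theorem \ref{nerovnostProDva}, with the derangement hypothesis forcing $\P_{\Theta_1,\Theta_2}(k)=0$ for all $k\geq 1$ by the same computation ($w=\Theta_1(w)=\Theta_2(w)$ implies $\Theta_1\Theta_2(w)=w$, impossible letterwise) used in the proof of Corollary \ref{odjistehoNplati}, only without needing uniform recurrence.
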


\begin{example} The reversal mapping  $\Tr$ on $\{0,1\}^*$ and the
antimorphism $\Theta$ determined by exchange of letters $0$ and $1$
used in Example \ref{dvaKratRich} satisfy the assumption of the
previous corollary. By an argument similar to the one used in the proof
of Corollary \ref{skorovzdy}, we deduce that $\Delta \mathcal{C}(n) = 0$
for all  $n\geq 1$, thus $ \mathcal{C}(2)=
\mathcal{C}(1)=\#\mathcal{A} = 2$. Therefore, the only infinite words
on the alphabet $\{0,1\}$  which are simultaneously $\Tr$-rich
and $\Theta$-rich are $\uu=(01)^\omega$ and
$\uu=(10)^\omega$\,.
\end{example}
The previous considerations justify the modification of the notion of $\Theta$-palindromic richness for words whose languages have
more symmetries. Before proceeding, we need to show that
there exist words for which the inequality  given in
Theorem \ref{nerovnostProDva} is in fact an equality. Let us show that
such a suitable example is the Thue-Morse word.\\

Let $\Theta$ be again the antimorphism on $\{0,1\}^*$ defined by
$\Theta(0) = 1$ and $\Theta(1) = 0$. This antimorphism commutes
with the reversal mapping $\Tr$ and $\Tr\Theta$ has no
fixed letter.  Consider the morphism  $0\mapsto
01$ and $1\mapsto 10$ which generates the Thue-Morse word. From
the form of the morphism one can easily see that the language of
its fixed points is invariant under $\Theta$ and $\Tr$. In
the article \cite{BlBrGaLa}, the classical palindromic complexity
and the $\Theta$-palindromic complexity of the Thue-Morse word is
described.

\begin{proposition}\label{TMpalindromy}
The palindromic complexity of the Thue-Morse word is
$$
\P_{\Tr}(n) = \left \{
\begin{array}{ll}
1 & \quad \text{ if } \ n  = 0, \\
2 & \quad \text{ if } \  1 \leq n \leq 4, \\
0 & \quad \text{ if } \ n \text{ is odd and } n \geq 5, \\
4 & \quad \text{ if } \ n \text{ is even and } 4^k < n \leq 3 \cdot 4^k, \text{ for } k \geq 1, \\
2 & \quad \text{ if } \ n \text{ is even and } 3 \cdot 4^k < n
\leq 4^{k+1}, \text{ for } k \geq 1.
\end{array}
\right.
$$
The $\Theta$-palindromic complexity of the Thue-Morse word is
$$
\P_{\Theta}(n) = \left \{
\begin{array}{ll}
1 & \quad \text{ if } \ n = 0, \\
2 & \quad\text{ if } \  n = 2, \\
0 & \quad \text{ if } \  n \text{ is odd and }, \\
4 & \quad \text{ if } \ n \text{ is even and } \frac{1}{2} \cdot 4^k < n \leq \frac{3}{2} \cdot 4^k, \text{ for } k \geq 1, \\
2 & \quad \text{ if } \ n \text{ is even and } \frac{3}{2} \cdot
4^k < n \leq \frac{1}{2} \cdot 4^{k+1}, \text{ for } k \geq 1.
\end{array}
\right.
$$
\end{proposition}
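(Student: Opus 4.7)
The plan is to exploit the self-similarity of $\uu_{TM}$ under $\varphi$. First I would verify the claimed values for small lengths (up to $n \leq 16$, say) by direct inspection of a sufficiently long prefix of $\uu_{TM}$; this fixes the base cases for the forthcoming induction and confirms the small exceptional values ($n=0,2$ and the odd $n\in\{1,3\}$ for $\P_\Tr$).

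For larger $n$, the central observation is the commutation relation
\[
\Theta \circ \varphi \ =\ \varphi \circ \Tr
\]
on $\{0,1\}^*$, which is immediate from evaluating both sides on the two letters and then extending by (anti)morphism rules. It implies that $\varphi$ sends every $\Tr$-palindrome in $\L(\uu_{TM})$ to a $\Theta$-palindrome in $\L(\uu_{TM})$. Conversely, overlap-freeness of $\uu_{TM}$ ensures that each factor of length $\geq 4$ admits a unique synchronised factorisation into blocks $\varphi(0)=01$ and $\varphi(1)=10$, so any $\Theta$-palindrome (resp.\ $\Tr$-palindrome) of even length $\geq 6$ in $\L(\uu_{TM})$ is of the form $\varphi(p)$, possibly with one boundary letter appended on each side, for a uniquely determined shorter $\Tr$-palindrome (resp.\ $\Theta$-palindrome) $p$ in $\L(\uu_{TM})$.

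From this desubstitution I extract coupled recurrences of the schematic form
\[
\P_{\Tr}(2k) \ =\ \P_{\Theta}(k) + \varepsilon_1(k), \qquad \P_{\Theta}(2k) \ =\ \P_{\Tr}(k) + \varepsilon_2(k),
\]
where the correction terms $\varepsilon_i(k)\in\{0,\pm 2\}$ record the contribution of palindromes coming from adding a letter on each side of a palindrome of length $2k-2$ of the opposite type. Separately, I would show by a short overlap argument that $\Tr$- and $\Theta$-palindromes of odd length $\geq 5$ cannot occur in $\uu_{TM}$: a long odd palindrome $axaxa$-pattern forced by desubstitution would create a forbidden overlap. Iterating the recurrences from the base cases then produces the geometric plateau pattern on $(4^k, 3\cdot 4^k]$ and $(3\cdot 4^k, 4^{k+1}]$ claimed for $\P_{\Tr}$, and the corresponding pattern shifted by the factor $1/2$ for $\P_{\Theta}$ (the shift reflecting that each application of $\varphi$ interchanges the two palindromic types while doubling lengths).

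The hard part will be the precise bookkeeping of the boundary corrections $\varepsilon_i(k)$ near the transition points $n=4^k$ and $n=3\cdot 4^k$: one must determine, as a function of where $k$ lies in its own interval, exactly how many palindromes of length $2k$ arise directly as $\varphi(p)$ versus as $a\,\varphi(p)\,\Theta(a)$ (or its $\Tr$-analogue). This requires a case distinction based on the centered extensions inside $\uu_{TM}$, which is carried out explicitly in \cite{BlBrGaLa}; I would follow their analysis for the final verification.
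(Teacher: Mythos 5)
The first thing to note is that the paper does not prove this proposition at all: it is quoted as a known result, with the proof residing entirely in \cite{BlBrGaLa} (just as Proposition \ref{TMcomplexita} is quoted from \cite{Br89} and \cite{LuVa}). Your proposal ultimately rests on the same reference --- you explicitly defer ``the precise bookkeeping of the boundary corrections'' to the analysis of \cite{BlBrGaLa} --- so in effect you end up exactly where the paper does, and your sketch is a fair outline of how the cited proof actually works. In particular, the commutation relation $\Theta\varphi=\varphi\Tr$ is correct (both sides are antimorphisms agreeing on the letters $0,1$), odd $\Theta$-palindromes are excluded trivially by the central letter, odd $\Tr$-palindromes of length at least $5$ are excluded by the cube/overlap argument you indicate, and unique desubstitution of long factors into the blocks $01,10$ does reduce even-length palindromes to the two cases $\varphi(p)$ and one-letter-padded $\varphi(q)$.

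Two caveats if your argument were meant to stand on its own. First, the decisive content of the proposition --- exactly where the values jump between $2$ and $4$, i.e.\ for which lengths the padded words $a\varphi(q)\Theta(a)$ (for $\P_{\Theta}$) and $a\varphi(q)a$ (for $\P_{\Tr}$) actually occur in $\L(\uu_{TM})$ --- is precisely what you leave to the reference; with the $\varepsilon_i(k)$ undetermined the coupled recurrences pin down nothing, so as an independent proof this is a genuine gap rather than a finishing detail. Second, a bookkeeping slip: if $w$ of length $2k$ is a $\Tr$-palindrome of the padded form, then $w=a\varphi(q)a$ with $q$ a $\Theta$-palindrome of length $k-1$, so the inner word $\varphi(q)$ of length $2k-2$ is a palindrome of the \emph{same} type as $w$; it is the desubstituted word $q$, of length $k-1$, that has the opposite type. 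Your description of $\varepsilon_i$ as counting extensions of length-$(2k-2)$ palindromes ``of the opposite type'' conflates these two levels, which is harmless in a sketch but would have to be fixed in the detailed case analysis.
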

The factor complexity of the Thue-Morse word was described in
1989 independently in \cite{Br89} and \cite{LuVa}.
\begin{proposition}\label{TMcomplexita}
The first difference of factor complexity of the  Thue-Morse word is
$$
\Delta \C(n) = \left \{
\begin{array}{ll}
1 & \quad \text{ if } \ n = 0, \\
4 & \quad \text{ if } \ 2^k < n \leq 3 \cdot 2^{k-1}, \text{ for } k \geq 1, \\
2 & \quad \text{ otherwise}.
\end{array}
\right.
$$
\end{proposition}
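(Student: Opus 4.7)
The plan is to reduce to counting right special factors and then exploit the substitutive structure of $\uu_{TM}$ generated by $\varphi$. Since the alphabet is binary, every right special factor has exactly two right extensions, so by \eqref{DeltaC}
$$
\Delta \C(n) = \#\{w \in \L_n(\uu_{TM}) \mid w \text{ is right special}\},
$$
and the proposition becomes a precise length-by-length count of right special factors.

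The technical core that I would establish first is a synchronization (or desubstitution) lemma for $\varphi$: any factor $w$ of $\uu_{TM}$ whose length exceeds a small threshold admits a unique decomposition $w = p \varphi(v) s$ with $v\in\L(\uu_{TM})$ and $p,s \in \{\varepsilon,0,1\}$, where the admissible choice of $p$ is dictated by the parity of the occurrence position of $w$. This follows from the overlap-freeness of $\uu_{TM}$ (Thue's theorem), which forbids long ambiguous desubstitutions. Using this, right-specialty of $w$ translates into a right-specialty condition on $v$, because both right extensions $w0$ and $w1$ must come from matching right extensions of $v$ after applying $\varphi$.

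With the desubstitution in place, I would derive a recurrence of the form $s(2m) = 2 s(m) + \delta_m$ and an analogous one for odd lengths, where $s(n) := \Delta \C(n)$ and the correction $\delta_m$ accounts for the two possible parities of the alignment offset. After verifying the formula by direct inspection of a sufficiently long prefix of $\uu_{TM}$ for the small values $n \leq 4$ (where synchronization may fail), one solves the recurrence. On each dyadic interval $(2^k, 2^{k+1}]$ the value of $s$ is constant on the two halves $(2^k, 3\cdot 2^{k-1}]$ and $(3\cdot 2^{k-1}, 2^{k+1}]$, taking the values $4$ and $2$ respectively; the breakpoint $3\cdot 2^{k-1}$ appears precisely because on the upper half a desubstitution already forces the right extension, halving the number of right special preimages.

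The principal obstacle is the bookkeeping of cases around parity: separating the RS factors that are aligned with the $\varphi$-blocks from those that are shifted by one letter, and propagating this through the recurrence. An arguably cleaner alternative is to apply Cassaigne's second-difference formula \eqref{DeltaNaDruhuC}: it is classical that the bispecial factors of $\uu_{TM}$ are either classical palindromes or $\Theta$-palindromes whose lengths follow an explicit doubling pattern derived from $\varphi$, and whose bilateral orders $\b(w)$ take only a few values. Summing $\b(w)$ over the bispecials of length $n$ gives $\Delta^2 \C(n)$, and one recovers $\Delta \C(n)$ by a single cumulation with the initial value $\Delta \C(0) = 1$.
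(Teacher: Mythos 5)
First, a point of comparison: the paper does not prove this proposition at all — it is quoted from \cite{Br89} and \cite{LuVa} — so your sketch can only be measured against those classical arguments, which indeed proceed roughly along the lines you indicate: on a binary alphabet \eqref{DeltaC} reduces $\Delta \C(n)$ to counting right special factors, and one desubstitutes through $\varphi$ using overlap-freeness/recognizability. The strategy is therefore reasonable, but what you have written is a plan rather than a proof, and the one place where the real content lies is exactly the place you leave open and, as stated, get wrong. Setting $s(n)=\Delta\C(n)$, the outcome of the desubstitution analysis is $s(2m)=s(m)$ and $s(2m+1)=s(m+1)$ for $m\geq 2$: a sufficiently long right special factor must end at a block boundary (if it ended in the middle of a block $\varphi(a)$, its right extension would be forced), hence it is either $\varphi(v)$ with $v$ right special, or $x\varphi(v)$ where $x$ is a single letter, and $x\varphi(v)$ is right special precisely when $yv$ is right special for the letter $y$ such that $\varphi(y)$ ends with $x$. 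In particular each right special factor of length $m$ produces exactly one right special factor of length $2m$, not two; since $s$ only takes the values $2$ and $4$, a relation of the shape $s(2m)=2s(m)+\delta_m$ with a small ``parity correction'' $\delta_m$ cannot hold unless $\delta_m=-s(m)$. Consequently your explanation of the breakpoint $3\cdot 2^{k-1}$ (``halving the number of right special preimages on the upper half'') does not describe what actually happens: the dyadic pattern arises by iterating the two recurrences above down to the seed values $\Delta\C(1)=\Delta\C(2)=2$, $\Delta\C(3)=4$, $\Delta\C(4)=2$.

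So the genuine gap is that the decisive combinatorial step — the classification of long right special factors under desubstitution and the resulting recurrences — is neither correctly stated nor derived, and everything after it (``one solves the recurrence'') is unsupported. The same criticism applies to your alternative route via \eqref{DeltaNaDruhuC}: the explicit list of bispecial factors of $\uu_{TM}$ and their bilateral orders is precisely the nontrivial content here, and it cannot be imported as ``classical'' inside a proof of the very statement it encodes. To repair the argument, prove the synchronization lemma with an explicit threshold, establish the two recurrences $s(2m)=s(m)$ and $s(2m+1)=s(m+1)$ together with the initial values checked on a prefix, and conclude by induction on $n$; this is essentially the proof found in \cite{Br89} and \cite{LuVa}.
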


Using these results on complexities we can show that the
Thue-Morse word is saturated by classical palindromes and
$\Theta$-palindromes up to the highest possible level given by the
inequality in Corollary \ref{komutuji}.

\begin{corollary}\label{TMrovnost} For the Thue-Morse word we have
$$
\Delta \C(n) + 4 = \P_{\Tr}(n) + \P_{\Tr}(n+1) +
\P_{\Theta}(n) + \P_{\Theta}(n+1)
$$
for all $n \geq 1$.
\end{corollary}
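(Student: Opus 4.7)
The plan is to combine the general inequality of Corollary~\ref{komutuji} with the explicit formulas of Propositions~\ref{TMpalindromy} and~\ref{TMcomplexita}, and to verify equality by a short case analysis. One direction is already free: $\Tr$ and $\Theta$ commute (immediate), and $\Tr\Theta$ swaps $0$ and $1$, so it fixes no letter. Corollary~\ref{komutuji} therefore yields $\Delta \C(n) + 4 \geq \P_{\Tr}(n) + \P_{\Tr}(n+1) + \P_{\Theta}(n) + \P_{\Theta}(n+1)$ for every $n \geq 1$, and only the reverse inequality needs work.

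Next, I would dispose of $n \in \{1, 2, 3, 4\}$ by direct substitution into the three formulas. For $n \geq 5$, exactly one of $n, n+1$ is odd and at least $5$, and on any such odd integer both $\P_{\Tr}$ and $\P_{\Theta}$ vanish by Proposition~\ref{TMpalindromy}. Writing $m$ for the even member of $\{n, n+1\}$, the right-hand side collapses to $\P_{\Tr}(m) + \P_{\Theta}(m)$, and the claim reduces to $\Delta \C(n) + 4 = \P_{\Tr}(m) + \P_{\Theta}(m)$, where $m-n \in \{0,1\}$ and $m \geq 6$.

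The key observation, extracted from Proposition~\ref{TMpalindromy}, is the following alignment of breakpoints. Inside any dyadic window $(2^{j}, 2^{j+1}]$ with $j \geq 2$, exactly one of $\P_{\Tr}, \P_{\Theta}$ is identically $4$ on the even integers of the window (the one whose jump $3 \cdot 4^{k}$ or $\tfrac{3}{2} \cdot 4^{k}$ falls above $2^{j+1}$), while the other drops from $4$ to $2$ precisely at $m = 3 \cdot 2^{j-1}$. Meanwhile Proposition~\ref{TMcomplexita} says $\Delta \C(n)$ drops from $4$ to $2$ at exactly $n = 3 \cdot 2^{j-1}$. Hence both sides equal $8$ on the first half of the window and $6$ on the second; since $3 \cdot 2^{j-1}$ is even, the parity shift $m = n$ or $m = n+1$ never straddles this breakpoint, so both sides agree throughout the window.

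The main obstacle is really just noticing this alignment: at first glance the powers-of-$4$ breakpoints in Proposition~\ref{TMpalindromy} look unrelated to the powers-of-$2$ breakpoint of Proposition~\ref{TMcomplexita}, but the identities $3 \cdot 4^{k} = 3 \cdot 2^{2k}$ and $\tfrac{3}{2} \cdot 4^{k} = 3 \cdot 2^{2k-1}$ make them coincide inside each window, and the parity of $j$ decides which of $\P_{\Tr}, \P_{\Theta}$ is the constant one. Once this is in place, the verification reduces to a two-interval comparison per window plus the small-$n$ base cases, completing the proof.
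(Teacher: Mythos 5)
Your proposal is correct and takes essentially the same route as the paper: the paper's proof likewise just combines Propositions \ref{TMpalindromy} and \ref{TMcomplexita} and verifies the equality interval by interval (presented as a table over the ranges $n=1,2,3$ and $4^k < n \leq \tfrac{3}{2}4^k$, $\tfrac{3}{2}4^k < n \leq 2\cdot 4^k$, etc.), which is exactly your dyadic-window case analysis in a different parametrization of the same breakpoints. Your additional appeal to Corollary \ref{komutuji} for one inequality is harmless but unnecessary, since the explicit formulas already give equality directly.
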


\begin{proof} The result follows immediately from Propositions \ref{TMcomplexita} and \ref{TMpalindromy}.
For reader's convenience we report the values of  $R(n) =
\P_{\Tr}(n) + \P_{\Tr}(n+1) + \P_{\Theta}(n) +
\P_{\Theta}(n+1)$ and $\Delta \C(n)$ in Table \ref{tab:TM_Rn}.

\renewcommand{\tabcolsep}{0.8cm}
\renewcommand{\arraystretch}{1.75}

\begin{table}
\begin{center}
\begin{tabular}{c|c|c}
$n$ & $R(n)$ & $\Delta \C(n)$ \\ \hline
$1$ & $2 + 2 + 0 + 2$ & $2$ \\ \hline
$2$ & $2 + 2 + 2 + 0$ & $2$ \\ \hline
$3$ & $2 + 2 + 0 + 4$ & $4$ \\ \hline
$4^k < n \leq \frac{3}{2} 4^k$ & $4+4$ & $4$ \\ \hline
$\frac{3}{2} 4^k < n \leq 2 \cdot 4^k$ & $4+2$ & $2$ \\ \hline
$2 \cdot 4^k < n \leq 3 \cdot 4^k$ & $4+4$ & $4$ \\ \hline
$3 \cdot 4^{k} < n \leq 4^{k+1}$ & $2+4$ & $2$ \\
\end{tabular}
\end{center}
\caption{Values $R(n)$ and $\Delta \C (n)$ for the Thue-Morse
word.} \label{tab:TM_Rn}
\end{table}

\end{proof}

In fact, the equality is also trivially attained for $n = 0$
while considering the inequality in its general form in Theorem \ref{nerovnostProDva}.

%


\section{Words with language closed under all elements of a group of
symmetries}\label{grupasymetrii}

If a finite set $G$ is a submonoid of $ AM({\A^*})$ such that its elements are non-erasing,
then, since it is finite, any of its elements restricted to the set of words
of length one is just a permutation on $\A$, and one can easily see that $G$ is a group.
If an antimorphism $\Theta$ is involutive, then
the corresponding permutation has cycles of length at most $2$. In
this section, we consider all antimorphisms with finite order, not
only of order $2$.

\begin{example} \label{Mahler}
The Champernowne word
$$ 12345678910111213141516171819202122232425262728293031\ldots$$
over the alphabet $\{0,1,2,\ldots, 9\}$ arises by writing
 decimal representations of all positive integers in increasing order.
The factor complexity of the Champernowne word is $\mathcal{C}(n) =
10^n$ and its language is invariant under any element of the group
$S_{10} \cup \Tr S_{10}$, where $S_{10}$ is the
group of permutations on a $10$-element set.

\end{example}

The Champernowne word has maximal factor complexity and its group of
symmetries is huge. The opposite extreme is a periodic word. We
shall see that its group of symmetries is much more restricted.

\begin{proposition} \label{periodicke_neni_uzavrene_na_neinvolutivni}
If $w\in \A^*$ is the shortest possible period of the periodic
word $\uu = w^{\omega}$ whose language  is closed under a non-erasing
antimorphism $\Theta: \A^* \mapsto \A^*$ of finite order, then
\begin{enumerate}
\item $\Theta$ is an involution;

\item $w=ps$,  where $p$ and $s$ are $\Theta$-palindromes;

\item $\P_\Theta(n)+ \P_\Theta(n+1) = 2$ for any $n\geq |w|$.

\end{enumerate}

\end{proposition}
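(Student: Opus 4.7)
For part (1), my first step is to show that $\Theta$ must map each letter to a single letter. Non-erasingness gives $|\Theta(a)| \geq 1$ for every letter $a$, hence $|\Theta(v)| \geq |v|$ for every word $v \in \mathcal A^{*}$. Iterating and using that $\Theta$ has finite order, say $\Theta^N = \id$, one obtains the chain
\[
1 = |a| \leq |\Theta(a)| \leq |\Theta^{2}(a)| \leq \cdots \leq |\Theta^{N}(a)| = |a| = 1,
\]
forcing $|\Theta(a)| = 1$ for every $a$. Thus $\Theta = R \circ \sigma$ for some permutation $\sigma$ of $\mathcal A$. Next, since $w$ is a shortest period of $\uu$ it is primitive, so the $L := |w|$ factors of $\uu$ of length $L$ are exactly the $L$ distinct cyclic conjugates of $w$. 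In particular $\Theta(w)$ is a cyclic rotation of $w$, which translates, writing $w = w_{0} w_{1} \cdots w_{L-1}$, to the identity
\[
\sigma(w_{j}) = w_{c - j \bmod L}\quad \text{for all } j,
\]
for a suitable constant $c \in \{0,\ldots,L-1\}$. Applying $\sigma$ once more, $\sigma^{2}(w_{j}) = w_{j}$ for all $j$; as every letter of $\mathcal A$ occurs in $w$, $\sigma^{2} = \id$, and hence $\Theta^{2} = \id$.

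For part (2), I would read the factorisation directly off the above reflection relation. Set $p := w_{0} w_{1} \cdots w_{c}$ and $s := w_{c+1} w_{c+2} \cdots w_{L-1}$ (with $s = \varepsilon$ when $c = L - 1$); clearly $w = ps$. The identity $\sigma(w_{j}) = w_{c-j}$ holds without modular reduction for $j \in \{0, \ldots, c\}$, so
\[
\Theta(p) = \sigma(w_{c})\sigma(w_{c-1})\cdots \sigma(w_{0}) = w_{0} w_{1} \cdots w_{c} = p.
\]
An analogous calculation using $\sigma(w_{c+1+i}) = w_{L-1-i}$ for $i \in \{0,\ldots,L-c-2\}$ (the same identity, now with a single wrap-around modulo $L$) yields $\Theta(s) = s$. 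Thus $p$ and $s$ are $\Theta$-palindromes.

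For part (3), I would combine the inequality \eqref{eq:nerovnost} with an explicit enumeration. Primitivity of $w$ gives $\mathcal C(n) = L$ for all $n \geq L$, so $\Delta \mathcal C(n) = 0$ and \eqref{eq:nerovnost} produces the upper bound $\PT(n) + \PT(n+1) \leq 2$. For the matching lower bound, the $L$ factors $f_{k} := w_{k} w_{k+1} \cdots w_{k+n-1}$ (indices modulo $L$) are pairwise distinct for $n \geq L$, and a short computation using $\sigma(w_{j}) = w_{c-j \bmod L}$ gives $\Theta(f_{k}) = f_{c-k-n+1 \bmod L}$. Hence $f_{k}$ is a $\Theta$-palindrome iff $2k \equiv c - n + 1 \pmod L$. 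When $L$ is odd this congruence has a unique solution, so $\PT(n) = 1$ for every $n \geq L$. When $L$ is even it has either two solutions or none, depending on the parity of $c - n + 1$; since this parity toggles as $n$ increases by one, two consecutive values sum to $2$.

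The main subtlety is the letter-rigidity step in (1): the argument that $|\Theta(a)| = 1$ is where the finite order hypothesis (as an antimorphism on all of $\mathcal A^{*}$, not merely on $\mathcal A$) is actually used. Once this reduction is in hand, $\Theta$ becomes $R$ composed with a letter permutation, and all further claims --- the involution property, the palindromic factorisation of $w$, and the enumeration of $\Theta$-palindromic factors --- follow from elementary arithmetic modulo $L$ driven by the single identity $\sigma(w_{j}) = w_{c-j \bmod L}$.
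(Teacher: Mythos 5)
Your argument is correct, and for parts (1) and (2) it is in substance the paper's argument in different notation: your reflection identity $\sigma(w_j)=w_{c-j \bmod L}$ is exactly the index form of the paper's factorization $ww=p\Theta(w)s$ (which yields $w=ps$, $\Theta(p)=p$, $\Theta(s)=s$), and your letter-rigidity step ($|\Theta(a)|=1$ from non-erasingness plus finite order) is what the paper invokes implicitly when it says $\Theta^2$ "just permutes the letters". Two small advantages of your phrasing: the single parameter $c$ absorbs the paper's case split $\Theta(w)=w$ versus $\Theta(w)\neq w$ (the first case being $c=L-1$, $s=\varepsilon$), and you get involutivity directly from $\sigma^2$ fixing every letter occurring in $w$, avoiding the paper's appeal to the claim that $p$ and $s$ each contain all letters of $\A$. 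Part (3) is where you genuinely diverge: the paper shows every $\Theta$-palindromic factor of length at least $|w|$ is a central factor of $(sp)^k s(ps)^k$ or $(ps)^k p(sp)^k$ and then argues by the parities of $|p|$ and $|s|$, whereas you enumerate the $L$ distinct conjugates and count solutions of $2k\equiv c-n+1 \pmod{L}$; the two countings are equivalent (the solutions $k$ are precisely the centers of the paper's two palindrome families), but yours is an exact computation of $\PT(n)$, which makes your appeal to the upper bound \eqref{eq:nerovnost} redundant. Both routes rest on the same use of minimality of $w$: primitivity guarantees the $L$ conjugates (resp. the two central factors of each length) are distinct, a standard fact you, like the paper, state without proof.
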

\begin{proof}
According to the convention we made in Preliminaries,  $w$ contains
all letters from $\A$.
 If $\Theta(w) = w$, then $\Theta^2(w) = w$. As $\Theta$ is non-erasing and its order is finite, $\Theta^2$ is a morphism which just permutes the
letters. The equality $\Theta^2(w) = w$ implies that $\Theta^2 =\rm{Id}$ and we can put $p=w$ and $s= \varepsilon$.

If $\Theta(w)\neq w$, then  $\Theta(w)$ is a factor of the word
$ww$. Let $p$ and $s$ be factors such that $ww = p\Theta(w)s$.
It is easy to see that in fact $w = ps$ since $|w| = \Theta(w)$. Therefore, $ww =
p\Theta(s)\Theta(p)s$ and thus $w = p\Theta(s) = \Theta(p)s$. In
other words, $s$ and $p$ are $\Theta$-pa\-lindromes and both
 contain all letters of $\A$.  Analogously to the
previous case, this already implies involutivity of $\Theta$.

To show the last item, observe that any
$\Theta$-palindrome $q\in\L(\uu)$ of length at least $|w|$ is a
central factor of a palindrome $(sp)^ks(ps)^k$ or $(ps)^kp(sp)^k$
for some $k\in \mathbb{N}$. By minimality of the period
$w$, the central factors   of $(sp)^ks(ps)^k$ and $(ps)^kp(sp)^k$
of length $n\geq |w|$ are distinct.  If $|p|$ and
$|s|$ have opposite parities, then we have  one $\Theta$-palindrome of length $n$
and one $\Theta$-palindrome of length $n+1$. If $|p|$ and
$|s|$ have the same parities, then we have  two
$\Theta$-palindromes of length $n$ and none of length $n+1$
or vice versa.
\end{proof}

A crucial role for the newly proposed definition of richness with
respect to more antimorphisms is played by graphs of symmetries.
We can assign such a graph to any infinite word $\uu$ whose language is invariant
under a group $G\subset AM({\mathcal{A}^*})$. For the first time, the most
simple variant of this graph for $G=\{ \rm{Id}, \Tr\}$ appeared in
the proof of the main theorem of the article \cite{BaMaPe}.
See also \cite{BuLuGlZa}.
 In the previous section, we used this graph for $G=\{
\rm{Id}, \Theta_1,\Theta_2,\Theta_1\Theta_2 \}$. Both examples involve
simple groups containing only antimorphisms of order $2$. In fact, no
such restriction is necessary.

Let us consider a finite group $G\subset AM({\mathcal{A}^*})$. We
define a relation on ${\mathcal{A}^*}$ by
 $$ v\sim w \qquad \Longleftrightarrow \qquad v=\Theta(w) \ \ \hbox{for some } \ \Theta \in G. $$

It is obvious that $\sim$ is an equivalence relation and that any equivalence class, again denoted by $[w]$ for $w\in {\mathcal{A}^*}$,
has at most $\#G$ elements.

\begin{defi} Let $G\subset AM({\mathcal{A}^*})$ be a finite group,
 $\uu$ be an infinite word with language closed under each
$\Theta \in G$ and $n\in \mathbb{N}$.

\begin{description}
\item[1)] The directed graph  of symmetries of the word $\uu$ is
$\overrightarrow{\Gamma}_n(\uu) = (V,\overrightarrow{E})$ with the set
of vertices $$V=\{ [w]\,|\, w\in \L_n(\uu), w \ \hbox{is LS or RS}
\}$$ and  an edge $e\in \overrightarrow{E} \subset \L(\uu)$ starts
in a vertex $[w]$ and ends in a vertex $[v]$, if
\begin{itemize}
\item the prefix of $e$ of length $n$ belongs to $[w]$,
 \item the suffix of $e$ of length $n$ belongs to $[v]$,
\item $e$ has exactly two occurrences of special factors of length $n$.
\end{itemize}

\item[2)] The graph of symmetries of the word $\uu$ is
$\Gamma_n(\uu)=(V,E)$ with the same set of vertices as
$\overrightarrow{\Gamma}_n(\uu)$ and for any $e\in \L(\uu)$ we have
$$[e] \in E \qquad \Longleftrightarrow \qquad e \in \overrightarrow{E}.$$
\end{description}
\end{defi}

\newsavebox{\tempbox}

\begin{figure}[h!]%
\sbox{\tempbox}{\begin{minipage}[t]{0.35\textwidth}
\centering\includegraphics{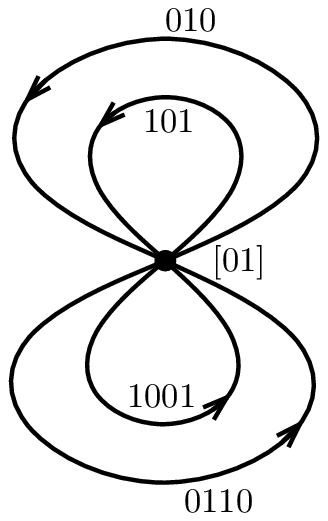}\vspace{\baselineskip}\end{minipage}}%
\centering
\subfloat[][]{ \label{fig:TM2_orientovany} \usebox{\tempbox}}%
\subfloat[][]{ \label{fig:TM2_neorientovany} %
\begin{minipage}[t]{0.35\textwidth}
\centering
\vbox to \ht\tempbox{%
\vfil
\includegraphics{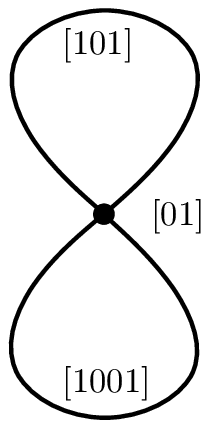}
\vfil}\vspace{\baselineskip}
\end{minipage}}%
\caption{(a) shows the graph $\protect \overrightarrow{\Gamma}_2(\uu_{TM})$ and (b) the graph $\Gamma_2(\uu_{TM})$  of the Thue-Morse word.}
\label{fig:TM2_grafy}
\end{figure}

Figure \ref{fig:TM2_grafy} shows these graphs for the Thue-Morse word.

\begin{defi} Let $G\subset AM({\mathcal{A}^*})$ be a finite group containing at least one antimorphism. We say
that an infinite  word $\uu$  is $G$-rich  (resp. almost
$G$-rich) if for each $n\in \mathbb{N}$ (resp. for each $n\in
\mathbb{N}$ up to finitely many exceptions) the following holds
\begin{itemize}
\item $\L(\uu)$ is invariant under all $\Theta \in G$;
  \item if $[e]$ is a loop in $\Gamma_n(\uu)$, then $e$ is a
  $\Theta$-palindrome for some $\Theta\in G$;
  \item the graph obtained from $\Gamma_n(\uu)$ by removing loops is a tree.
  \end{itemize}
\end{defi}

\begin{remark}  Let us compare the previous definition with the classical notions of  richness and almost richness.

\begin{itemize} \item Consider an eventually periodic word $\uu$ with language
closed under an antimorphism. Since  invariance  under an
antimorphism implies  recurrence, the word $\uu$ is periodic. Therefore for $n$ greater than the length of the period, the
graph $\overrightarrow{\Gamma}_n(\uu)$ is empty and thus a tree. It
means that, according to our definition, an eventually periodic word
is almost $G$-rich if and only if its language is closed under all
elements of the group $G$.

In \cite{BrHaNiRe},  the $\Tr$-defect of  periodic words is
studied. It is shown that $\Tr$-defect is finite  if and
only if the language of the  word is closed under $\Tr$. Let
us recall that words with finite $\Tr$-defect were
called  almost rich in \cite{GlJuWiZa}. Therefore, our definition of almost $G$-rich
periodic words does not contradict the old one in the case $G=\{\rm{Id}, \Tr\}$.

\item  Even for aperiodic words  our definition applied to the
group $G = \{\rm{Id}, \Tr\}$ is equivalent to the classical
definition of richness and almost richness on the set of words
with language closed under reversal, see Theorem 1.1, Proposition
1.2 and 3.5 in \cite{BuLuGlZa}. The same is valid for the group $G
= \{\rm{Id}, \Theta\}$, where $\Theta$ is an involutive antimorphism,
see Theorem 2 and Corollary 7 in \cite{Sta2010}. \end{itemize}
\end{remark}

Although we   allowed the group  $G$ to have antimorphisms of
higher order, in fact only the involutive antimorphisms $\Theta$ can
have a fixed point $w$ containing all letters from the alphabet.
Therefore, the notion of $\Theta$-palindromic complexity
$\P_{\Theta}$ makes sense only for  involutions.
Let $G^{(2)}$ be the set $G^{(2)} = \left \{ \Theta \in G \mid \Theta \text{ is an antimorphism and } \Theta^2 = \rm{Id} \right \}$, i.e., the set of involutive antimorphisms of $G$.

\begin{thm}\label{nerovnostProVice} Let $G\subset AM({\mathcal{A}^*})$ be a finite group containing an antimorphism and let
$\uu$ be an infinite  word whose language is invariant under all
elements of $G$. If there exists an integer $N\in \mathbb{N}$ such
that in any factor of $\uu$ of length at least $N$  all letters
of $\mathcal{A}$ occur, then
\begin{equation}\label{mistoStromovi} \Delta \C (n) + \# G \ \ \geq  \sum_{\Theta \in G^{(2)}}\Bigl(\P_{\Theta}(n) + \P_{\Theta}(n+1)\Bigr) \qquad
\hbox{for any} \ n \geq N\,.
\end{equation}
\end{thm}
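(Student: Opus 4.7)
The plan is to mimic the graph-theoretic argument of Theorem~\ref{nerovnostProDva}, with the four-element group replaced by the arbitrary finite group $G$. I reuse the equivalence $v\sim w$ iff $v=\Theta(w)$ for some $\Theta\in G$ and construct the directed graph $\overrightarrow{\Gamma}_n(\uu)$ as defined in Section~\ref{grupasymetrii}. Just as in the two-antimorphism case one has
$$
\#\overrightarrow{E}=\sum_{\substack{w\in\L_n(\uu)\\ w\text{ special}}}\#\Lext(w)\qquad\text{and}\qquad \Delta\C(n)=\#\overrightarrow{E}-S_n,
$$
where $S_n$ denotes the number of special factors of length $n$.

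The structural replacement for the size classification $\zeta,\alpha,\beta$ is provided by the hypothesis $n\geq N$: any factor of length at least $N$ uses every letter of $\A$. Rerunning the Preliminaries' argument (a morphism, or a non-erasing antimorphism of finite order, fixing such a factor must be either $\id$ or an involution lying in $G^{(2)}$), the $G$-stabilizer $H(w)=\{\Theta\in G\mid\Theta(w)=w\}$ of any length-$n$ factor $w$ is either $\{\id\}$ or $\{\id,\Theta\}$ for a \emph{unique} $\Theta\in G^{(2)}$. Consequently $\#[w]\in\{\#G,\#G/2\}$; let $\alpha$ and $\beta$ denote the numbers of vertex classes of the two sizes, so that $S_n=\tfrac{\#G}{2}\alpha+\#G\,\beta$.

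Now split $\#\overrightarrow{E}=A+B$ into loops and non-loops and estimate each separately. For a non-loop edge $e$ between $[v]\ne[w]$, any non-identity element of its stabilizer would either be a morphism (forced to equal $\id$ since $e$ uses every letter) or an antimorphism that swaps the length-$n$ prefix and suffix of $e$, forcing $[v]=[w]$; hence each $G$-orbit of non-loop edges has exactly $\#G$ elements, all running between the same pair of vertices. Combining this with the Rauzy-graph connectedness of $\Gamma_n$ (the same input tacitly used in Theorem~\ref{nerovnostProDva}) gives $B\geq \#G(\alpha+\beta-1)$. For the loops I generalize the palindrome-in-edge correspondence: for each $\Theta\in G^{(2)}$, every $\Theta$-palindrome of length $n+1$ sits inside a unique loop, and every non-special $\Theta$-palindrome of length $n$ sits inside a unique loop---indeed, the unique left and right one-letter extensions of a non-special $\Theta$-palindrome are related by $\Theta$, so the whole maximal extension of the palindrome (up to the two special factors at the ends) is itself a $\Theta$-palindrome. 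Since for $n\geq N$ the involution fixing a given palindromic factor is unique, these contributions to $A$ do not overlap across different $\Theta\in G^{(2)}$, and one obtains
$$
A\ \geq\ \sum_{\Theta\in G^{(2)}}\bigl(\P_\Theta(n)+\P_\Theta(n+1)\bigr)\ -\ \tfrac{\#G}{2}\alpha,
$$
where the subtracted term discounts the $\Theta$-palindromes already present among the special factors of length $n$ (each of the $\alpha$ palindromic vertex classes contributes $\#G/2$ palindromic elements).

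Substituting both bounds into $\Delta\C(n)=A+B-S_n$ the $\alpha$- and $\beta$-terms cancel and the desired inequality $\Delta\C(n)+\#G\geq\sum_{\Theta\in G^{(2)}}(\P_\Theta(n)+\P_\Theta(n+1))$ drops out. The main obstacle is the loop estimate: one has to verify carefully both that the palindrome-to-loop assignment is defined (every relevant palindrome really lives inside a loop centered on it) and that it is injective once summed over $G^{(2)}$---and this is exactly the place where the hypothesis $n\geq N$ is essential, since it rules out factors simultaneously fixed by two different involutions (in the two-antimorphism proof these produced the ``$\zeta$-type'' vertices that would otherwise spoil the count).
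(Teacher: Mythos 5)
Your argument is the same graph-theoretic argument as the paper's: the orbit/stabilizer classification of vertices into classes of size $\#G$ and $\#G/2$ (the paper's $\beta$ and $\alpha$), the split $\#\overrightarrow{E}=A+B$, the connectivity bound $B\geq \#G(\alpha+\beta-1)$, the loop bound $A\geq\sum_{\Theta\in G^{(2)}}\bigl(\P_{\Theta}(n)+\P_{\Theta}(n+1)\bigr)-\tfrac{\#G}{2}\alpha$, and the identity $\Delta\C(n)=\#\overrightarrow{E}-S_n$ are exactly the paper's steps, just phrased via stabilizers rather than via the pairwise implications $\Theta_1\neq\Theta_2\Rightarrow\Theta_1(v)\neq\Theta_2(v)$. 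The injectivity issue you flag as ``the main obstacle'' is resolvable precisely as you suspect and as in the proof of Theorem \ref{nerovnostProDva}: a $\Theta$-palindrome lying strictly inside an edge $e$ forces $\Theta(e)=e$ and the palindrome to be centered in $e$; since for $n\geq N$ no word of length $\geq n$ is fixed by two distinct antimorphisms of $G$, each loop hosts at most one such palindrome, so flagging it is not a defect relative to the paper's own level of detail.

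There is, however, a genuine missing case: you implicitly assume $\overrightarrow{\Gamma}_n(\uu)$ is nonempty, i.e.\ that there exist special factors of length $n$. If $\uu$ is periodic (e.g.\ $(01)^\omega$ with the order-four group of Example \ref{dvaKratRich}), then for large $n$ there are no vertices and no edges at all, so $A=B=S_n=0$ while $\sum_{\Theta\in G^{(2)}}\bigl(\P_\Theta(n)+\P_\Theta(n+1)\bigr)>0$; your loop estimate $A\geq\sum_{\Theta}(\cdots)-\tfrac{\#G}{2}\alpha$ is then simply false, because the palindromes have no edge to live in, and the chain of inequalities collapses. The theorem as stated does cover such words, and the paper treats this case separately: if the graph is empty then $\uu$ is periodic, every antimorphism of $G$ is involutive, and Proposition \ref{periodicke_neni_uzavrene_na_neinvolutivni} (items 1 and 3) gives $\P_\Theta(n)+\P_\Theta(n+1)=2$ for each $\Theta\in G^{(2)}$, so both sides of \eqref{mistoStromovi} equal $\#G$. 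You need to add this dichotomy (graph nonempty versus $\uu$ periodic) for your proof to be complete; once added, your argument coincides with the paper's.
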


\begin{proof}

Let $\Psi$ be an antimorphism of $G$. The mapping $\Theta
\mapsto \Psi\Theta$ is a bijection on $G$, satisfying
$$\phi\in G \ \hbox{ is a morphism} \quad \Longleftrightarrow \quad
\Psi\phi\in G \ \ \hbox{ is an antimorphism}.$$ This means that
$G$ has an even number of elements, say $\#G=2k$.\\

Let us fix $n \geq N$. First, suppose  that the graph
$\overrightarrow{\Gamma}_n(\uu)$ is nonempty. \\

Since  each factor of $\uu$ longer than $N$ contains all letters,
 for any two antimorphisms $\Theta_1,\Theta_2$ of $G$ we have\,:

\begin{equation}\label{dlouheanti}
\Theta_1\neq \Theta_2 \Longrightarrow  \Theta_1(v) \neq \Theta_2
(v) \ \ \hbox{for any $v$ such that $|v|\geq N$.}
\end{equation}
And similarly, for any two morphisms $\varphi_1$ and $\varphi_2$
of $G$ we have
\begin{equation}\label{dlouheantianti}
\varphi_1\neq \varphi_2 \Longrightarrow  \varphi_1(v) \neq
\varphi_2 (v) \ \ \hbox{for any $v$ such that $|v|\geq N$.}
\end{equation}
On the other hand, if $v$ is a $\Theta$-palindrome for an
antimorphism $\Theta\in G$, then for any antimorphism $\Theta_i
\in G$,  the word $\Theta_i(v)$ is a $(\Theta_i\Theta\Theta_i^{-1})$-palindrome. We may conclude the following for the
directed graph $\overrightarrow{\Gamma}_n(\uu)$ with $n \geq N$.

\begin{itemize}
\item A vertex $[w]$ has $\# G =2k$ elements if $w$ is not a
$\Theta$-palindrome for any antimorphism  $\Theta\in G$. Denote
the number of such vertices by $\beta$.

 \item  A vertex $[w]$ has $k$ elements if there exists an antimorphism $\Theta \in
 G$ such that $w = \Theta(w)$.  Denote
the number of such vertices by $\alpha$.

 \item If two distinct vertices $[w]$ and $[v]$ are connected by
 an edge $e$ starting in $w$ and ending in $v$, then there exist
 at least $2k$ edges between these two vertices, namely $k$ edges $\varphi(e)$ having
 the same orientation as $e$ and $k$ edges $\Theta(e)$ having the opposite
 orientation,
 for any morphism $\varphi\in G$ and any antimorphism $\Theta\in
 G$.

 \item No factor of length at least $n$  is  a $\Theta$-palindrome simultaneously
for two distinct antimorphisms $\Theta$.
\end{itemize}
Using the property mentioned in the last item, the proof is
now easier than the proof of Theorem \ref{nerovnostProDva}.

As all special factors of $ \L_n(\uu)$ belong to the classes
forming vertices in $\overrightarrow{\Gamma}_n(\uu)$, any factor $f\in
\L_{n+1}(\uu)$  as well as any non-special factor in $f\in \L_{n}(\uu)$
is a factor (which is neither  the prefix of length $n$, nor
a suffix of length $n$) of exactly one edge $e$ in
$\overrightarrow{\Gamma}_n(\uu)$. Clearly $\Theta(\overrightarrow{E}) =
\overrightarrow{E}$. If such a factor $f$ is a
$\Theta$-palindrome, then necessarily $\Theta(e)=e$ and $e$ is a
loop in $\overrightarrow{\Gamma}_n(\uu)$. For the number of edges in the
directed graph $\overrightarrow{\Gamma}_n(\uu)$ we have the lower bound:
\begin{equation}\label{pocetHran1} \#\overrightarrow{E} \geq 2k(\alpha + \beta -1) + A
\end{equation}
 where
\begin{equation}\label{pocetHran2}A \geq \sum_{\Theta \in G^{(2)}}\Bigl(\P_{\Theta}(n) +
\P_{\Theta}(n+1)\Bigr) - k\alpha\,.
\end{equation}
Again, as in the proof of Theorem \ref{nerovnostProDva}, the
number of edges  $\#\overrightarrow{E}$ can be written as
$$
\#\overrightarrow{ E}  = \sum_{w\in \L_n(\uu), w \text{ special}} \# \Lext (w)
=
\sum_{\substack{w\in \L_n(\uu), w \text{ special} \\ \# [w] = 2k }} \# \Lext
(w) + \sum_{\substack{w\in \L_n(\uu), w \text{ special} \\ \# [w] = k  }} \#
\Lext (w). $$ Since the first sum on the right side has $2k\beta$
summands and the second sum has $k\alpha$ summands, we obtain
$$
\#\overrightarrow{ E} = \sum_{w\in \L_n(\uu), w \text{ special}} \Bigl(\# \Lext
(w) - 1\Bigr) + 2k \beta + k\alpha =  \Delta \C(n)   + 2k \beta +
k\alpha.
$$
This together with \eqref{pocetHran1} and \eqref{pocetHran2}
implies the theorem in case that $\overrightarrow{\Gamma}_n(\uu)$ is
nonempty.

If $\overrightarrow{\Gamma}_n(\uu)$ is empty, then  $\uu$ is periodic
and, according to the point 1. in Proposition \ref{periodicke_neni_uzavrene_na_neinvolutivni}, the left side of
the inequality \eqref{mistoStromovi} is twice the number of
involutive antimorphisms in $G$. According to the point 3. in
Proposition \ref{periodicke_neni_uzavrene_na_neinvolutivni}, the
right  side of the inequality has the same value.
\end{proof}

\begin{remark}\label{Test} The previous proof enables us to test $G$-richness by verifying the equality
in \eqref{mistoStromovi} instead of looking at the graph
$\Gamma_n$  for $n\geq N$. For $n <N$ we still have to check that
$\Gamma_n$ is a tree.
\end{remark}

\begin{remark} \label{rem:weaker}
The assumption on the integer $N$ in Theorem \ref{nerovnostProVice}
 can be replaced by the following weaker assumption:
there exists an integer $N$ such that
for any two antimorphisms $\Theta_1, \Theta_2 \in G$ it holds
$$
\Theta_1\neq \Theta_2 \Longrightarrow  \Theta_1(v) \neq \Theta_2
(v) \ \ \hbox{for any $v$ with $|v|\geq N$,}
$$
and for any two morphisms $\varphi_1, \varphi_2 \in G$ it holds
$$
\varphi_1\neq \varphi_2 \Longrightarrow  \varphi_1(v) \neq
\varphi_2 (v) \ \ \hbox{for any $v$ with $|v|\geq N$.}
$$
\end{remark}

Since the existence of $N$ required in Theorem
\ref{nerovnostProVice} is trivially satisfied  for uniformly
recurrent words, the proof of Theorem \ref{nerovnostProVice} gives
a  simple criterion for almost $G$-richness.

\begin{corollary}\label{almostG-Rich}
 Let $G\subset AM({\mathcal{A}^*})$ be a finite group containing at least one antimorphism.
If an infinite uniformly recurrent  word $\uu$ has its language
invariant under all elements of $G$, then  $\uu$ is almost
$G$-rich if and only if there exists  $N\in \mathbb{N}$ such that
 $$
\Delta \C (n) + \# G \ \ =  \sum_{\Theta \in G^{(2)}}\Bigl(\P_{\Theta}(n) +
\P_{\Theta}(n+1)\Bigr) \qquad \hbox{for any} \ n \geq N\,. $$
\end{corollary}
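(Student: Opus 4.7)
The plan is to follow the proof of Theorem~\ref{nerovnostProVice} and to show that, for $n$ sufficiently large, equality in the stated identity is equivalent to the two structural conditions on $\Gamma_n(\uu)$ appearing in the definition of almost $G$-richness. Throughout, write $2k = \#G$ (even, as established at the beginning of the proof of Theorem~\ref{nerovnostProVice}).

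First I would use uniform recurrence to pick $N_0 \in \N$ such that every factor of $\uu$ of length at least $N_0$ contains every letter of $\A$. For $n \geq N_0$, the hypothesis of Theorem~\ref{nerovnostProVice} is satisfied and relations \eqref{dlouheanti} and \eqref{dlouheantianti} apply: any non-identity element of $G$ fixing a factor of length $\geq N_0$ must be an involutive antimorphism in $G^{(2)}$ (a morphism in $G$ fixing a factor that contains every letter of $\A$ is the identity). Hence each equivalence class $[w]$ with $w \in \L_n(\uu)$ has size either $2k$ or $k$, the smaller case corresponding to $w$ being a $\Theta$-palindrome for a unique $\Theta \in G^{(2)}$. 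Writing $\beta$ and $\alpha$ for the numbers of vertices of the two kinds in $\overrightarrow{\Gamma}_n(\uu)$, the total count of $\Theta$-palindromic factors of length $n$ (summed over $\Theta \in G^{(2)}$) equals $\sum_{\Theta \in G^{(2)}} \P_\Theta(n)$, of which exactly $k\alpha$ are special.

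The estimates \eqref{pocetHran1}--\eqref{pocetHran2} from the proof of Theorem~\ref{nerovnostProVice} can then be rewritten as
\begin{equation*}
\Delta\C(n) + \#G - \!\!\sum_{\Theta\in G^{(2)}}\!\!\bigl(\P_\Theta(n)+\P_\Theta(n+1)\bigr) = \bigl(B - 2k(\alpha+\beta-1)\bigr) + \Bigl(A + k\alpha - \!\!\sum_{\Theta\in G^{(2)}}\!\!\bigl(\P_\Theta(n)+\P_\Theta(n+1)\bigr)\Bigr),
\end{equation*}
where $A$ and $B$ denote the numbers of loops and of non-loop directed edges in $\overrightarrow{\Gamma}_n(\uu)$. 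Both summands on the right-hand side are non-negative, so equality in \eqref{mistoStromovi} for a given $n \geq N_0$ is equivalent to the simultaneous vanishing of both. Vanishing of the first forces the graph $\Gamma_n(\uu)$ with loops removed to be a tree, since any two distinct adjacent vertices of $\overrightarrow{\Gamma}_n(\uu)$ are joined by at least $2k$ directed edges (one image under each element of $G$), and the bound $B \geq 2k(\alpha+\beta-1)$ is then attained precisely when the underlying undirected graph (minus loops) is a tree with $\alpha+\beta$ vertices and $\alpha+\beta-1$ edges. Vanishing of the second forces every loop of $\overrightarrow{\Gamma}_n(\uu)$ to be fixed by some $\Theta \in G^{(2)}$: each non-special $\Theta$-palindrome of length $n$ and each $\Theta$-palindrome of length $n+1$ is centered in its unique containing edge, yielding an injection from palindromic factors into palindromic loops whose equality of cardinalities prevents any loop from escaping the image.

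Both implications then follow: if $\uu$ is almost $G$-rich, picking $N$ beyond $N_0$ and beyond the finitely many exceptional values of $n$ in the definition gives the required equality; conversely, if equality holds for all $n \geq N$, we enlarge $N$ so that $N \geq N_0$, and the equivalence above yields the two graph conditions of $G$-richness for all but finitely many $n$. The step I expect to demand the most care is the tightness analysis of the bound on $A$: one has to verify that distinct $\Theta$-palindromic middle factors of length $n$ or $n+1$ produce distinct loops, which rests on the observation (already implicit in the proof of Theorem~\ref{nerovnostProVice}) that such a palindromic middle factor is centered in its unique containing loop, so each palindromic loop contributes at most one palindromic middle factor of each length.
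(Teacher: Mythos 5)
Your proposal is correct and takes essentially the same route as the paper, which obtains the corollary by invoking the equality case of the proof of Theorem~\ref{nerovnostProVice} (cf.\ Remark~\ref{Test}); you merely spell out the two-term nonnegative decomposition and the per-$n$ equivalence with the two graph conditions that the paper leaves implicit. One small point of precision: for the injectivity of the map from palindromic factors into loops you need that each loop carries at most one palindromic middle factor in total (not merely one of each length), which follows since a $\Theta$-palindromic loop has central factors only of one parity and, for $n\geq N$, no factor is fixed by two distinct antimorphisms of $G$.
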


\section{Examples of $G$-rich words} \label{sec:examples}

\subsection{The Thue-Morse word and its generalizations}
To demonstrate  $G$-richness of the Thue-Morse word, we use Remark
\ref{Test}.  From Corollary \ref{TMrovnost} and the shape of
$\Gamma_2(\uu_{TM})$ in Figure \ref{fig:TM2_neorientovany} we can deduce the
following statement.

\begin{corollary} The Thue-Morse word is $G$-rich,
where $G$ is the group generated by the reversal mapping and  the
involutive antimorphism determined by the exchange of letters.
\end{corollary}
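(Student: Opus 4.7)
The plan is to combine Corollary \ref{TMrovnost} with the equality criterion of Remark \ref{Test}. First, I would identify the group: since $\Tr$ and $\Theta$ are commuting involutive antimorphisms, the group they generate is $G = \{{\rm Id}, \Theta, \Tr, \Tr\Theta\}$, so $\#G = 4$ and $G^{(2)} = \{\Tr, \Theta\}$. The language $\L(\uu_{TM})$ is already known to be invariant under both generators, hence under every element of $G$. Moreover $\uu_{TM}$ is uniformly recurrent and cube-free, so every factor of length at least $N=3$ contains both letters of $\A$; thus the hypothesis of Theorem \ref{nerovnostProVice} is satisfied with $N=3$.

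Next, I would apply Remark \ref{Test}. For all $n \geq 3$ it reduces the verification of $G$-richness to the numerical identity
$$
\Delta \C(n) + 4 \;=\; \P_{\Tr}(n) + \P_{\Tr}(n+1) + \P_{\Theta}(n) + \P_{\Theta}(n+1),
$$
which is exactly the content of Corollary \ref{TMrovnost} (valid already for all $n \geq 1$). Tracing through the proof of Theorem \ref{nerovnostProVice}, equality in this identity forces both \eqref{pocetHran1} and \eqref{pocetHran2} to be equalities, which in turn implies that $\Gamma_n(\uu_{TM})$ with loops removed is a tree and that every loop is labeled by either a $\Tr$- or a $\Theta$-palindrome. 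This disposes of all $n \geq 3$ at once.

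For the small cases $n \in \{1, 2\}$ I would verify the two structural conditions directly. For $n = 2$, the graph $\Gamma_2(\uu_{TM})$ is drawn in Figure \ref{fig:TM2_neorientovany}; a direct reading confirms that removing its loops yields a tree and that each loop carries a $\Tr$- or a $\Theta$-palindrome. For $n=1$ the argument is immediate: both letters are bispecial and collapse into a single equivalence class $\{0,1\}$, so $\Gamma_1(\uu_{TM})$ has one vertex whose only edges are the loops $[00]$ (a classical palindrome) and $[01]$ (a $\Theta$-palindrome, since $\Theta(01) = \Theta(1)\Theta(0) = 01$), and the conditions are trivially satisfied.

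The only real obstacle is purely verificatory: confirming that $N = 3$ actually satisfies the hypothesis of Theorem \ref{nerovnostProVice}, and that the equality in \eqref{mistoStromovi} genuinely implies the structural conclusion about $\Gamma_n$. Both facts are already encoded in the proof of Theorem \ref{nerovnostProVice}, so once the small-$n$ inspection is carried out the proof is essentially mechanical.
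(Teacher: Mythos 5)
Your proposal is correct and follows essentially the same route as the paper: invoke Remark \ref{Test} together with the equality of Corollary \ref{TMrovnost} for all large $n$ (the hypothesis of Theorem \ref{nerovnostProVice} holds with $N=3$ since $\uu_{TM}$ is overlap-free, hence contains neither $000$ nor $111$), and inspect the small-$n$ graphs directly, with $\Gamma_2(\uu_{TM})$ read off from Figure \ref{fig:TM2_neorientovany}. Your explicit treatment of $n=1$ (and the identification $G=\{{\rm Id},\Tr,\Theta,\Tr\Theta\}$) only spells out details the paper leaves implicit, so the two arguments coincide in substance.
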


The following generalization of the Thue-Morse word was considered
for instance in \cite{AlSh}. Let $s_b(n)$ denote the sum of digits
in the base-$b$ representation of the integer $n$. The infinite
word $\tt_{b,m}$ is defined as
$$
\tt_{b,m} = \left ( s_b(n) \mod m \right )_{n=0}^{+\infty}.
$$
Using this notation, the famous Thue-Morse word equals
$\tt_{2,2}$. The word $\tt_{b,m}$ is over the alphabet
$\{0,1,\ldots, m-1\}$ and is also a fixed point of a  primitive
morphism, as already mentioned in \cite{AlSh}. To abbreviate the formulas, let us denote $i\oplus_m
j = i+j \mod m$ for any $i,j \in \{0,1,\ldots, m-1\}$. It is easy
to see  that the morphism defined by
$$ \varphi(k) =  k(k\oplus_m 1)(k\oplus_m 2)\ldots \bigl(k\oplus_m
(b-1)\bigr)\quad \text{for any} \ k \in \{0,1,\ldots, m-1\}
$$
fixes the word $\tt_{b,m}$.

\begin{example} For  parameters $(b,m) \neq (2,2)$ an explicit description of the group $G$ under which
the language of  $\tt_{b,m}$ is invariant
 and  values of palindromic complexities $\mathcal{P}_\Theta$ is not available.
A known fact which can be easily seen from the morphism $\varphi$ is that
if $m \mid (b-1)$, then $\tt_{b,m}$ is periodic.
We first present some periodic examples.

 \begin{description}

\item[The periodic case $m \mid (b-1)$] \quad In this case, one can see that from the morphism that
$\tt_{b,m} = (01 \ldots (m-1))^{\omega}$.
 For example, if $b=5$  and $m=2$, then the morphism has the form $0 \mapsto
01010$ and $1 \mapsto 10101$.

For $m=2$, the word $(01)^{\omega}$  trivially satisfies the equality in
Corollary \ref{komutuji} for the reversal mapping and the antimorphism
determined by the exchange of  0 and 1. Thus $\tt_{2k+1,2}$ is
$G$-rich with the same group $G$ as the Thue-Morse word
$\tt_{2,2}$.

For $m = 3$, the language of the word $(012)^{\omega}$ is closed under all elements of  a group generated by the $3$
involutive antimorphisms on $\{0,1,2\}^*$ different from the
reversal mapping. The proof of richness is left to the reader.

\end{description}

For other values of $b$ and $m$, we used computer resources, namely the open-source mathematical software \texttt{Sage} \cite{sage_4_6}, to look for candidates exhibiting richness.

\begin{description}

 \item[The word $\tt_{2k,2}$] \quad From the  shape of the morphism it follows that the language of the language of the word  $\tt_{2k,2}$  is invariant under the same group $G$ as
  $\uu_{TM}$. For these words  our computer experiments suggest that equality also holds. Table \ref{tab:TM_42} shows some values for $\Delta
\C(n)$ and $\displaystyle R(n) =  \sum_{\Theta \in G^{(2)}}\Bigl(\P_{\Theta}(n) +
\P_{\Theta}(n+1)\Bigr)$ for $\tt_{4,2}$.

\renewcommand{\tabcolsep}{1cm}
\renewcommand{\arraystretch}{2}
\begin{table}
\begin{center}
\begin{tabular}{c|c|c}
$n$ & $R(n)$ & $\Delta \C(n)$ \\ \hline
$0 < n < 17$ & $6$ & $2$ \\ \hline $17 \leq n < 29$ & $8$ & $4$ \\
\hline $29 \leq n < 65$ & $6$ & $2$ \\ \hline $65 \leq n < 113$ &
$8$ & $4$ \\ \hline
$113$ & $6$ & $2$ \\
\end{tabular}
\end{center}
\caption{Values $R(n)$ and $\Delta \C (n)$ for $\tt_{4,2}$, $\#G = 4$.}
\label{tab:TM_42}
\end{table}

\item[The word $\tt_{2,4}$] \quad This word is a fixed point of
the morphism
$$
0\mapsto 01, \ \ 1\mapsto 12, \ \ 2\mapsto 23,\  \  \text{and} \ \
3\mapsto 30.
$$
A computer test on factors of length $100$ of the prefix of $\tt_{2,4}$ of length $30000$
suggests  that its language is invariant under four antimorphisms:

$\Theta_1: \ 0\to 0, 1\to 3, 2\to 2, 3\to 1$,

$\Theta_2: \ 0\to 1, 1\to 0, 2\to 3, 3\to 2$,

$\Theta_3: \ 0\to 2, 1\to 1, 2\to 0, 3\to 3$,

$\Theta_4: \ 0\to 3, 1\to 2, 2\to 1, 3\to 0$.

\noindent The group $G$ generated by those $4$ antimorphisms has  8 elements.
It seems that the word $\tt_{2,4}$ is $G$-rich as well.

\end{description}
Our only conclusion is that identification of $G$-rich words among
$\tt_{b,m}$ requires a further  study.

\end{example}

All examples of words we proved to be $G$-rich or we suspect
to be $G$-rich have a common property: any antimorphism in $G$ has
order two. Proposition
\ref{periodicke_neni_uzavrene_na_neinvolutivni} says that   for
periodic words this property is necessary.

\subsection{A $G$-rich word with irrational densities of letters}

If the language of an infinite word with well-defined densities of letters is
invariant under an antimorphism $\Theta$ of finite order and
$\Theta$ is not the reversal mapping $\Tr$, then the density vector is invariant under the permutation corresponding to
 $\Theta$. For example, the densities of both letters $1$ and $0$
in the Thue-Morse word are  necessarily $\tfrac{1}{2}$, the
densities of all letters of the Champernowne word from Example
\ref{Mahler} are $\frac{1}{10}$. Nevertheless, the densities of
letters  need not be rational.

In this section we describe a $G$-rich word with irrational
densities of letters.

\begin{example} \label{prikladG2}
Let $\varphi$ be the morphism on $\{0,1,2,3\}^*$ defined as
\renewcommand\tabcolsep{6pt}
\renewcommand\arraystretch{1}
$$
\varphi: \left \{
\begin{array}{l}
0 \mapsto 0130 \\
1 \mapsto 1021\\
2 \mapsto 102 \\
3 \mapsto 013
\end{array}
\right.
$$
and let $\uu$ be a fixed point of $\varphi$. The matrix of this
morphism $M_\varphi$ and the eigenvector $x_\Lambda$ corresponding
to the dominant eigenvalue $\Lambda = 2+\sqrt{3}$ are
$$ M_\varphi = \left(
\begin{array}{cccc}
2&1&1&1\\
1&2&1&1\\
0&1&1&0\\
1&0&0&1
\end{array}\right) \quad {\rm and} \quad  x_\Lambda = \left(
\begin{array}{c}
\tfrac{\sqrt{3}-1}{2}\\
\tfrac{\sqrt{3}-1}{2}\\
\tfrac{2-\sqrt{3}}{2}\\
\tfrac{2-\sqrt{3}}{2}
\end{array}\right) \,.
$$
The matrix $M_\varphi$ is primitive and  therefore our
morphism is  primitive as well (see \cite{Fogg}). It is known that the components of the
eigenvector corresponding to the dominant eigenvalue are
proportional to the densities of letters. In our case, the letters
$0$ and $1$ have density $\tfrac{\sqrt{3}-1}{2}$. The letters
$2$ and $3$ have density $\tfrac{2-\sqrt{3}}{2}$.\\

We show the following properties of the word $\uu$:

\begin{enumerate}
\item
  Language $\Lu$ is closed under two involutive antimorphisms $\Ta$ and $\Tb$,
where
$$
\Ta: 0 \mapsto 1, 1 \mapsto 0, 2 \mapsto 2, 3 \mapsto 3 \quad
\text{and} \quad \Tb: 0 \mapsto 0, 1 \mapsto 1, 2 \mapsto 3, 3
\mapsto 2\,.
$$

\item  The first increment of factor complexity satisfies
$$\Delta \mathcal{C}(n)= 2\ \ \text{ for any}\ \  n\in \N^+\,.$$

 \item Only $\Theta_1$-palindromes of length $1$ and $2$  occurring in $\L(\uu)$  are factors  $2,3, 10$ and
$01$; only $\Theta_2$-palindromes of length $1$ and $2$  are
factors $0$ and $1$.

\item Any $\Theta_i$-palindrome $w \in \L(\uu)$ has a unique
$\Theta_i$-palindromic extension, i.e., there exists a unique letter
$a\in \mathcal{A}$ such that $aw\Theta_i(a) \in  \L(\uu)$.

\end{enumerate}

\end{example}

\begin{proof}[Proof of the properties of $\uu$ defined in Example \ref{prikladG2}]  $\,$

\begin{description}

\item[Property 3.]  To check this property is an easy task since
$\L_2(\uu) = \{ 02, 21, 13, 30, 01, 10 \}$.

\item[Property 1.]    As $\uu$ is a fixed point of the primitive
morphism $\varphi$, the word $\uu$ is uniformly recurrent.
To prove the invariance of $\L(\uu)$ under $\Theta_i$, it is
sufficient to show that $\uu$ contains infinitely many
$\Theta_i$-palindromes. We give a construction producing from
a $\Theta_i$-palindrome a longer $\Theta_i$-palindrome.

 Let $a \in \A$. Denote by $p_a$
the unique factor of $\uu$ of length $5$ such that $\varphi(a)p_a$
is a factor of $\uu$. The correctness of this definition can be
seen by looking at the images by $\varphi$
 of the factors of $\uu$ of length $2$, as listed above.
One can show that $p_0 = p_2 = 10210$ and $p_1 = p_3 = 01301$.

Fix $i \in \{1,2\}$.
Let $ab \in \L_2(\uu)$.
It is easy to show that
\begin{equation} \label{prikladG2_lemma}
\Theta_i (p_a) = p_{\Theta_i(b)}.
\end{equation}

We now prove the following claim.
If $w = w_1 \ldots w_n \in \Lu$, then
\begin{equation} \label{prikladG2_claim}
\Theta_i \big( \varphi(w)p_{w_n}  \big) = \varphi \big(
\Theta_i(w) \big) p_{\Theta_i(w_1)}.
\end{equation}
The claim can be shown by induction on $|w|$. Indeed, one can easily verify that for $a \in \A$ we have
\begin{equation}\label{prikladG2_indukcestart}
\Theta_i \big(\varphi(a)p_a\big)  =\varphi \big(\Theta_i(a)\big) p_{\Theta_i(a)}.
\end{equation}
Suppose now the claim holds for the word $w = w_1 \ldots w_n \in \Lu$.
We show that it holds also for $aw \in \Lu$, $a \in \A$.
We have
$$
S=\Theta_i \big(\varphi(aw)p_{w_n} \big) =
\Theta_i \big(\varphi(w)p_{w_n} \big)\Theta_i \big(\varphi(a)\big) =
\varphi \big( \Theta_i(w) \big)
p_{\Theta_i(w_1)}\Theta_i \big(\varphi(a)\big).
$$
Since $aw_1 \in \Lu$, using \eqref{prikladG2_lemma} we have
$$
 p_{\Theta_i(w_1)}  = \Theta_i \left(p_a\right)\,.
$$
We  continue to manipulate the equation using \eqref{prikladG2_indukcestart}
  $$S= \varphi \big( \Theta_i(w)
\big)\Theta_i \left(p_a\right)\Theta_i \big(\varphi(a) \big) = \varphi
\big( \Theta_i(w) \big)\Theta_i \big(\varphi(a)p_a \big)
=\varphi \big( \Theta_i(w) \big)
\varphi \big(\Theta_i(a) \big) p_{\Theta_i(a)}
$$
and finally
$$
S = \varphi \bigl( \Theta_i(w) \Theta_i(a)\bigr)
p_{\Theta_i(a)}=\varphi \bigl( \Theta_i(aw) \bigr) p_{\Theta_i(a)}
$$
as we claimed.

Let $w$ be a $\Theta_i$-palindrome.
Using \eqref{prikladG2_claim} we have
$$
\Theta_i \big( \varphi(w)p_{w_n} \big) = \varphi \big( \Theta_i(w) \big)
p_{\Theta_i(w_1)}  =  \varphi( w ) p_{w_n} .
$$
Therefore, $\varphi( w ) p_{w_n}$ is a $\Theta_i$-palindrome as
well.

\item[Property 2.]  We evaluate $\Delta \C(n)$. As we have
explained in the Preliminaries, we need to look at left special
factors and their extensions. In our word $\uu$, there are only
two left special factors of length one:  the letter $0$ with
$\Lext(0) = \{1,3\}$ and the letter $1$  with  $\Lext(1) = \{0,2\}$.
From the shape of the morphism we see that for any LS factor
$w$ its image $\varphi(w)$ is LS as well, and moreover,  $\Lext(w)=
\Lext(\varphi(w))$. Thus the factors $\varphi^k(0)$ and
$\varphi^k(1)$ are LS factors both with two left extensions for
any $k \in \mathbb{N}$. Since any prefix of LS factor is a LS
factor as well, we can deduce, that any prefix of a fixed point
$\lim\limits_{k\to \infty}\varphi^k(0)$ or $\lim\limits_{k\to
\infty}\varphi^k(1)$  is a left special factor.

For any length $n$ we  have two LS factors of length $n$ each with
two left extensions. To show that there are no other LS factors in $\uu$, one has to show that any left special factor $w$
longer than one is a prefix of $\varphi(v)$ where $v$ is
 a LS factor with the same left extension. A proof of this part is left to the reader.

Using the equation \eqref{DeltaC} we  can conclude that $\Delta
\C(n) = 2$ for all $n \geq 1$.

\item[Property 4.] The invariance of $\uu$ under $\Theta_i$
implies  that $\Theta_i(\Lext(v)) = \Rext(\Theta_i(v))$ for any $v
\in \L(\uu)$.  Analogously,  for bilateral orders we have $\b(v) =
\b(\Theta_i(v))$.

Thus, our description of LS  factors gives immediately that for any
$n$ there exist in $\uu$ exactly two RS factors of length $n$, each
with two extensions. Moreover, $v$ is LS if and only if  $\Theta_1(v)$ is RS
if and only if $\Theta_2\Theta_1(v)$ is LS.  Therefore, two LS  factors form a
pair $v$ and $\Theta_2\Theta_1(v)$ and have  the same bilateral
order.

We show by contradiction that any $\Theta_1$-palindrome has a
unique $\Ta$-palindromic extension. Consider a factor  $w =
\Theta_1(w)\in \L(\uu)$.

First, suppose that $w$ has no $\Theta_1$-palindromic extension.
Since any factor has at least one left  and one right extension,
there exist letters $a$  and $b$ such that $awb \in \L(\uu)$. As $w$ has
no $\Theta_1$-palindromic extensions, we have $b\neq \Theta_1(a)$.
The invariance of language under $\Theta_1$  implies that
$\Theta_1(b)w\Theta_1(a) \in  \L(\uu)$.   It means that  $w$ is a
bispecial factor with the left extensions $a, \Theta_1(b)$ only
and with the right extensions $b, \Theta_1(a)$ only. Thus $\b(w) =
-1$. As we have mentioned, the second BS factor of the same length
is the factor $\Theta_2(w)$ and for its bilateral order we have
$\b(\Theta_2(w))=\b(w) = -1$. According to \eqref{DeltaNaDruhuC} we
have  $\Delta^2 \C (n) = \sum_{w \in \L_n(\uu)} \b(w) = -2$ which is a
contradiction with Property 2.

Now suppose that $w$ has two  $\Theta_1$-palindromic extensions,
i.e., there exist  two letters $a\neq b$, such that
$aw\Theta_1(a)$  and $bw\Theta_1(b)$ belong to $\Lu$.
We have either $\b(w) = -1$ or $\b(w) = 1$ and we can repeat the argument
to deduce  a contradiction $\Delta^2 \C (n) = \pm 2$.

\end{description}
\end{proof}

Finally,  Property 3  implies  $P_{\Ta}(1) + P_{\Ta}(2) +
P_{\Tb}(1) + P_{\Tb}(2) = 6$.   Thus by using Property 4, we have
for all $n \geq 1$
$$
P_{\Ta}(n) + P_{\Ta}(n + 1) + P_{\Tb}(n) + P_{\Tb}(n + 1) = 6.
$$
Property 1 means $\# G =4$.   Together with $\Delta \C (n)=2$
from Property 2 it gives for any $n \geq 1$ the equality
$$\Delta \C (n) + \# G  =P_{\Ta}(n) + P_{\Ta}(n + 1) + P_{\Tb}(n) + P_{\Tb}(n +
1)$$ and thus the $G$-richness of $\uu$ (using Remark \ref{rem:weaker}).

\section{Open problems}
Several papers were devoted to rich words, stating many results.
How fruitful is
 the new definition of the $G$-richness remains open.
Answers to questions listed below may help to clarify that.

\begin{enumerate}

\item \label{oo1} If  the group $G$ only contains the identity and an
involutive antimorphism $\Theta$ , then  the $G$-rich words (i.e.
$\Theta$-rich words)  can be characterized by return words, see
\cite{GlJuWiZa}, \cite{Sta2010}. Is there such a
characterization of $G$-rich words for general $G$?

\item Palindromic closure ($\Theta$-palindromic closure) is a
very effective tool for constructing rich and almost rich
words, see for example \cite{BuLuLuZa2} and \cite{GlJuWiZa}.
 Can  $\Theta$-palindromic closures be used to construct
$G$-rich words?

\item  \label{oo3} Can a reasonable  $G$-analogue to $\Theta$-defect be
defined for a group of symmetries $G$?

\item How to modify the right side of the inequality in Theorem \ref{nerovnostProVice} to obtain an inequality for all $n \in
\mathbb{N}$ as it holds in Theorem \ref{nerovnostProDva}?

\item Is there an inequality analogous to \eqref{ABCD} if one
replaces the classical palindromic complexity $\mathcal{P}$ by the
$\Theta$-palindromic complexity $\mathcal{P}_\Theta$ for some antimorphism $\Theta$? Can  this
 inequality be improved if  $\uu$ contains simultaneously infinitely many
palindromes and $\Theta$-palindromes?

\item Given a group $G \subset AM(\A^*)$, how to find an infinite word $\uu$ such that it is (almost) $G$-rich?

\item Is there an explicit formula for the sequence $r(n)$ from Remark \ref{rich_long}?

\end{enumerate}

During the review process of this article, we answered questions \ref{oo1} and \ref{oo3} in \cite{PeSta2}.

\section*{Acknowledgments}
We would like to thank the anonymous referees for their numerous suggestions that improved the presentation of this paper.
The experimental results were done using the open-source mathematical software \texttt{Sage} \cite{sage_4_6}.
This work was supported by the Czech Science Foundation
grant GA\v CR 201/09/0584, by the grants MSM6840770039 and LC06002
of the Ministry of Education, Youth, and Sports of the Czech
Republic, and by the grant of the Grant Agency of the Czech
Technical University in Prague grant No. SGS11/162/OHK4/3T/14.


\end{document}